\title[Compositional Construction of Approximate
Abstractions]{Compositional Construction of Approximate Abstractions of
Interconnected Control Systems}
\author{Matthias Rungger}
\address{Department of Electrical and Computer Engineering at the Technical University of Munich, 80333 Munich, Germany.} 
\email{matthias.rungger@tum.de}
\author{Majid Zamani}
\address{Department of Electrical and Computer Engineering at the Technical University of Munich, 80333 Munich, Germany.} 
\email{zamani@tum.de}
\keywords{Simulation Functions,
Approximate Abstractions,
Interconnected Control Systems,
Compositionality  
} 
\date{}
\newtheorem{theorem}{Theorem}
\newtheorem{lemma}{Lemma}
\newtheorem{corollary}{Corollary}
\newtheorem{definition}{Definition}
\newtheorem{remark}{Remark}
\newcommand{\intcc}[1]{\ensuremath{{\left[#1\right]}}}
\newcommand{\intoc}[1]{\ensuremath{{\left]#1\right]}}}
\newcommand{\intco}[1]{\ensuremath{{\left[#1\right[}}}
\newcommand{\intoo}[1]{\ensuremath{{\left]#1\right[}}}
\newcommand{\B}{\mathbb{B}}
\newcommand{\R}{\mathbb{R}}
\newcommand{\N}{\mathbb{N}}
\DeclareMathOperator{\im}{im}
\newcommand{\G}{\LTLsquare}
\newcommand{\F}{\LTLdiamond}
\newcommand{\id}{{\mathrm{id}}}
\begin{document}

\maketitle        
         
\begin{abstract}                          
We consider a compositional construction of approximate abstractions of
interconnected control systems. In our framework, an abstraction acts as a
substitute in the controller design process and is itself a continuous control
system. The abstraction is related to the concrete control system via a
so-called simulation function: a Lyapunov-like function, which is used to
establish a quantitative bound between the behavior of the approximate
abstraction and the concrete system. In the first part of the paper,  we provide
a small gain type condition that facilitates the compositional construction of
an abstraction of an interconnected control system together with a simulation
function from the abstractions and simulation functions of the individual
subsystems. In the second part of the paper, we restrict our attention to linear
control system and characterize simulation functions in terms of controlled
invariant, externally stabilizable subspaces. Based on those characterizations,
we propose a particular scheme to construct abstractions for linear control
systems. We illustrate the compositional construction of an
abstraction on an interconnected system consisting of four linear subsystems. We
use the abstraction as a substitute to synthesize a controller to enforce a certain linear temporal logic specification.
\end{abstract}

\section{Introduction}

One way to address the inherent difficulty in modeling, analyzing and
controlling complex, large-scale, interconnected systems, is to apply a divide-and-conquer
scheme~\cite{Keating11}. In this approach, as a first step, the overall system is partitioned
in a number of reasonably sized components, i.e., subsystems.
Simultaneously, a number of appropriate interfaces to connect the
individual subsystems are introduced. Subsequently, the analysis and the design of
the overall system is reduced to those of the subsystems. There exist
different reasoning schemes to ensure the correctness of such a
component-based, compositional analysis and design procedure. One scheme, which is often
invoked in the formal methods community, is called \emph{assume-guarantee
reasoning}, see
e.g.~\cite{MisraChandy81,HenzingerQadeerRajamaniTasiran02,Frehse05}.
Here, one establishes the correctness of the composed system by
guaranteeing that each subsystem is correct, i.e., satisfies its
specification, under the assumption that all other subsystems are
correct. The assume-guarantee reasoning is always correct, if there is
no circularity between assumptions and guarantees. In the case of circular
reasoning, some additional ``assume/guarantee'' assumptions
are imposed. Another approach, which is known from control theory,
invokes a so
called \emph{small gain} condition, see
e.g.~\cite{JiangTeelPraly94,DullerudPaganini00,DIW11,DashkovskiyRuefferWirth10} to establish the
stability of the interconnected system. For example
in~\cite{DIW11,DashkovskiyRuefferWirth10},
the authors assume that the \emph{gain functions} that are associated
with the Lyapunov functions of the individual subsystems satisfy
a certain ``small gain'' condition. The condition certifies a
small (or weak) interaction of the subsystems, which
prevents an amplification of the signals across possible feedback
interconnections. Similarly to the assume-guarantee reasoning, the small
gain condition is always satisfied in the absence of any feedback
interconnection \cite[and references therein]{DashkovskiyRuefferWirth10}.

In this paper, we use the later reasoning and present
a method for the compositional construction of approximate abstractions of
interconnected nonlinear control systems. In our
approach, an abstraction is itself a continuous control system
(possibly with lower dimension), which
is used as a substitute in the controller design process. The
correctness reasoning from the abstraction to the concrete system is
based on a notion of
\emph{simulation function}, which relates the concrete system with
its abstraction. Simulation functions provide a quantitative bound between the behavior of
the concrete systems and their abstractions. We employ a small gain
type condition to construct a simulation function that relates the abstract
interconnected system to the concrete interconnected system from the
simulation functions of the individual subsystems. In the second part
of the paper, we focus on the construction of abstractions (together
with the associated simulation functions) of linear
control systems.
First, we characterize simulation functions in terms
of controlled invariant, externally stabilizable subspaces. Subsequently,
we propose a particular construction of abstractions of linear control
systems. We conclude the paper with the construction of an abstraction
together with a simulation function of an interconnected system
consisting of four linear subsystems. We use the constructed
abstraction as a substitute in the controller synthesis procedure
to enforce a certain linear temporal logic property \cite{BK08} on the
concrete interconnected system. As we demonstrate, the controller synthesis would not have been possible without
the use of the abstraction.

{\bf Related Work.} 
Compositional reasoning schemes  for
verification in connection with
abstractions of control systems are developed
in~\cite{TabuadaPappasLima04,Frehse05,KvdS10}. The methods employ
exact notions of abstractions which are based on simulation relations~\cite{Frehse05,KvdS10} and simulation
maps~\cite{TabuadaPappasLima04}, for which constructive procedures exist only for rather restricted classes of control systems, e.g. linear control
systems~\cite{vdS04} and linear hybrid automata~\cite{Frehse05}. In
contrast to the exact notions, the approximate abstractions which we
study in this paper are based on simulation functions whose structures are
closely related to (incremental) Lyapunov functions. Thus, advanced
nonlinear control techniques developed to construct Lyapunov functions
have the potential to also be used to construct
simulation functions. 
For example the toolbox developed  in~\cite{MurthyIslamSmolkaGrosu15} uses
sum-of-squares techniques to construct bisimulation functions to
relate nonlinear control systems.

An early approach to the compositional construction of simulation
functions is given in~\cite{Gir07}, where the 
interconnection of two subsystems is studied. 
Compositional schemes for general
interconnected systems for the
construction of finite abstractions of linear and nonlinear control systems
are presented in~\cite{TI08} and~\cite{PPdB14}, respectively. Like in this paper, small gain type
conditions are used to facilitate the compositional construction.
As in our framework an
abstraction is itself a continuous control system (potentially with lower dimension), the benefits of the proposed
scheme are not limited to synthesis procedures based on finite abstractions, and
therefore are potentially useful for a great variety of controller synthesis
schemes, most notably computationally expensive schemes (in terms of the state
space dimension of the system) such as~\cite{BM08,
BemporadMorariDuaPistikopoulos02,YTCBB12,RMT13}.
Nevertheless, as we demonstrate by an example, even for a
synthesis scheme based on finite abstractions, we can apply our
results as a first pre-processing step to
reduce the dimensionality of a given control system, before the construction of
the finite abstraction, and therefore substantially reduce the computational
complexity.

As we seek abstractions with reduced state space dimensions, our
approach is closely related to the rich theory of model order
reduction~\cite{Antoulas05}. Specifically, the construction of abstractions of
linear control systems (similar to the Krylov subspace methods and balanced order reduction
schemes) can be classified as projection based
methods~\cite{VillemagneSkelton87}. Additionally, similar to~\cite{SM09}, the
proposed compositional construction
of abstractions of interconnected control systems leads to a structure preserving
reduction technique. 
While in~\cite{Antoulas05,VillemagneSkelton87,SM09} the model mismatch is established
with respect to $\mathcal{H}_2/\mathcal{H}_\infty$ norms, we use simulation functions
to derive $\mathcal{L}_\infty$ error bounds, which are essential to reason about
complex properties, e.g. linear temporal logic properties \cite{BK08}, across related systems.

To summarize, our contribution
is twofold: 1) We present a small gain type condition to
construct an abstraction of an interconnected system and a corresponding simulation function from the abstractions of the subsystems and their simulation functions. It is neither limited to
two interconnected systems~\cite{Gir07}, nor to synthesis schemes based on finite
abstractions~\cite{TI08,PPdB14}.
2) We characterize
simulation functions for linear subsystems in terms of controlled invariant,
externally stabilizable subspaces, which leads to constructive procedures to
determine abstractions of linear systems. Simulation functions for linear systems
have been used in~\cite{GP09,TH12,FuShahTanner13}. However, a geometric characterization
of simulation functions, similar to~\cite{vdS04}, was missing. Moreover, this
characterization allows to show that the conditions proposed in~\cite{GP09} to
construct abstractions are not only sufficient, but actually also necessary.

A preliminary version of this work appeared
in~\cite{RZ15}. In this paper we present a less restrictive small
gain condition and provide a novel geometric
characterization of simulation functions for linear control systems.

\section{Notation and Preliminaries}
We denote by $\N$ the set of non-negative integers and by
$\R$ the set of real numbers.
We annotate those symbols with subscripts to restrict those sets in
the obvious way, e.g. $\R_{>0}$ denotes the positive real numbers.
We use $\R^{n\times m}$, with $n,m\in\N_{\ge1}$,
to denote the vector space of real matrices with $n$ rows and $m$ columns. The identity matrix in $\R^{n\times n}$ is denoted by
$I_n$. For $a,b\in\R$ with $a\le b$, we denote the closed, open and half-open intervals in $\R$ by $\intcc{a,b}$,
$\intoo{a,b}$, $\intco{a,b}$, and $\intoc{a,b}$, respectively. For $a,b\in\N$ and $a\le b$, we
use $\intcc{a;b}$, $\intoo{a;b}$, $\intco{a;b}$, and $\intoc{a;b}$ to
denote the corresponding intervals in $\N$.
Given $N\in\N_{\ge1}$, vectors $x_i\in\R^{n_i}$, $n_i\in\N_{\ge1}$ and $i\in\intcc{1;N}$, we
use $x=(x_1;\ldots;x_N)$ to denote the vector in $\R^N$ with
$N=\sum_i n_i$ consisting of the concatenation of vectors~$x_i$.

We use $|\cdot |$ to denote the Euclidean norm of vectors in 
$\R^n$ as well as the spectral
norm, of matrices in $\R^{n\times m}$. 
Also for
$\xi:\R_{\ge1}\to \R^n$ we introduce $||\xi||_\infty:=\sup_{t\in\R_{\ge0}} |\xi(t)|$.

Given a function $f:\R^n\to \R^m$ and $\bar x\in\R^m$, we use
$f\equiv \bar x$ to denote that $f(x)=\bar x$ for all $x\in\R^n$. If
$\bar x$ is the zero vector, we simply write $f\equiv 0$.  The identity
function in $\R^n$ is denoted by $\id$, where the
dimension is always clear from the context. We use
$\mathsf{D}V:\R^n\to\R^{1\times n}$ to denote the gradient of a scalar
function $V:\R^n\to\R_{\ge0}$ and
$\mathsf{D}^{+}V(x,v)=\limsup_{t\to 0,t>0}\tfrac{1}{t}(V(x+tv)-V(x))$ to
denote the upper-right Dini derivative in the direction of $v$. 
Given two subsets $A,B\subseteq\R^n$,
we use $A+B=\{a+b\mid a\in A,b\in B\}$ to denote the Minkowsky set
addition.

We use the usual notation $\mathcal{K}$, $\mathcal{K}_\infty$ and $\mathcal{KL}$
to denote the different classes of comparison functions, see
e.g.~\cite{DashkovskiyRuefferWirth10}. Moreover, we use MAF$_n$ to denote the
set of \emph{monotone aggregation functions}~\cite{DashkovskiyRuefferWirth10}, i.e., the class of functions
$\mu:\R^n_{\ge0}\to \R_{\ge0}$ that satisfy: i) $\mu(s)\ge 0$ for all
$s\in\R_{\ge0}^n$ and $\mu(s)=0$ iff $s= 0$; ii) for $s,r\in\R^n_{\ge0}$ 
$s_i>r_i$ for all $i\in\intcc{1;n}$ implies $\mu(s)>\mu(r)$; iii) $|s|\to
\infty$ implies $\mu(s)\to \infty$.

We recall some concepts from the geometric approach to linear systems
theory~\cite{BM92}. Let $A\in\R^{n\times n}$ and $B\in\R^{n\times m}$. We use the usual
symbols $\im B$ and $\ker B$ to denote image and kernel of $B$.
A linear subspace $S\subseteq \R^n$ is called
$(A,B)$-\emph{controlled invariant} if 
there exists a matrix $K$ (of appropriate
dimension) such that $(A+BK)S \subseteq S$, where 
the matrix-subspace product is given by $AS:=\{x\in \R^n\mid \exists_{y\in S}\,x=Ay\}$.
An $(A,B)$-controlled invariant subspace $S\subseteq \R^n$ is 
$(A,B)$-\emph{externally stabilizable} if there exists a matrix $K$ (of appropriate
dimension) such that $(A+BK) S\subseteq S$ and
$(A+BK)|_{\R^{n}/S}$ is Hurwitz, i.e., the real parts of all the
eigenvalues are strictly less than~$0$. Here, $(A+BK)|_{\R^n/S}$
denotes the map induced by $(A+BK)$ on the quotient space  $\R^n/S$,
see~\cite[Def.~3.2.2]{BM92}. 

\section{Background and Motivation}

In this work, we study nonlinear control systems of the following
form.
\begin{definition}\label{d:sys}
A \emph{control system} $\Sigma$ is a tuple
\begin{IEEEeqnarray}{c}\label{e:sys}
\Sigma=\left(X,U,W,\mathcal{U},\mathcal{W},f,Y,h\right),
\end{IEEEeqnarray}
where $X\subseteq \R^n$, $U\subseteq \R^m$, $W\subseteq \R^p$, and
$Y\subseteq \R^q$ are the \emph{state space},
\emph{external input space}, 
\emph{internal input space}, 
and \emph{output space}, respectively. We use the symbols $\mathcal{U}$ and $\mathcal{W}$ 
to, respectively, denote the set 
of piecewise
continuous functions
\mbox{$\nu:\R_{\ge0}\to U$} and $\omega:\R_{\ge0}\to W$.
The function
$f:X\times U\times W\to\R^n$ is the \emph{vector field} and
$h:X\to Y$ is the \emph{output function}.
\end{definition}

In our definition of a control system, we distinguish between
\emph{external} inputs $u\in U$ and \emph{internal} inputs
$w\in W$. The purpose of this
distinction will become apparent in Section~\ref{s:inter} where we introduce the
interconnection of systems. Basically, we use the internal inputs to define
the interconnection. For now, without referring to the
interconnection, we can interpret the internal inputs as
\emph{disturbances} over which we have no control and the
external inputs as \emph{control} inputs which we are allowed to
modify.

A control system $\Sigma$ induces a set of trajectories by the 
differential equation
\begin{IEEEeqnarray}{c}\label{e:ode}
  \begin{IEEEeqnarraybox}[\relax][c]{rCl}
    \dot\xi(t)&=&f(\xi(t),\nu(t),\omega(t)),\\
    \zeta(t)&=&h(\xi(t)).%
  \end{IEEEeqnarraybox}
\end{IEEEeqnarray}
A \emph{trajectory}
of $\Sigma$ is a tuple $(\xi,\zeta,\nu,\omega)$, consisting of a
\emph{state trajectory} $\xi:\R_{\ge0}\to X$, an \emph{output
trajectory} \mbox{$\zeta:\R_{\ge0}\to Y$}, and input trajectories
$\nu\in\mathcal{U}$ and $\omega\in\mathcal{W}$,
that satisfies~\eqref{e:ode} for almost all times
$t\in\R_{\ge0}$. We often use $\xi_{x,\nu,\omega}$ and
$\zeta_{x,\nu,\omega}$ to
denote the state trajectory and output trajectory associated with 
input trajectories $\nu\in\mathcal{U}$, $\omega\in\mathcal{W}$ and \emph{initial state}
$x=\xi(0)$, without explicitly referring to the tuple $(\xi,\zeta,\nu,\omega)$.

Throughout the paper, we impose the usual regularity
assumptions~\cite{LSW96} on $f$ and assume that $X$ is strongly invariant and $\Sigma$ is forward
complete, so that for every initial state
and input trajectories, there exists a unique state trajectory which is defined
on the whole semi-axis.

We recall the notion of simulation function, introduced
in~\cite{GP09}, which we adapt here to match our notion of control system
with internal and external inputs. As we show in Section~\ref{s:lin},
for the case of linear control systems, our notion of simulation function is
related to the notion of simulation relation used in~\cite{vdS04}.

\begin{definition}\label{d:sf} Let
$\Sigma=\left(X,U,W,\mathcal{U},\mathcal{W},f,Y,h\right)$ and
$\hat \Sigma=(\hat X,\hat U,\hat W,\hat{\mathcal{U}},\hat{\mathcal{W}},\hat
f,\hat Y,\hat h)$ be
two control systems with $p=\hat p$ and $q=\hat q$. A 
continuous 
function \mbox{$V:\hat X\times X\to\R_{\ge0}$}, locally Lipschitz on 
\mbox{$(\hat X\times X)\setminus V_0$ with $V_0=\{(\hat x,x)\mid V(\hat
x,x)=0\}$}, is
called a \emph{simulation function from $\hat\Sigma$ to $\Sigma$} if
for every
$x\in X$, $\hat x\in \hat X$, $\hat u\in \hat U$, $\hat w\in \hat W$, there
exists $u\in U$ so that for all $w\in W$ we have the following inequalities
\begin{IEEEeqnarray}{rCl}\label{e:bsf:1}
\alpha(|\hat h(\hat x)-h(x)|)&\le& V(\hat x, x),\\
\mathsf{D}^{+} V\left((\hat x, x),
\begin{bmatrix}
\hat f(\hat x,\hat u,\hat w)\\ \label{inequality1}
f(x,u,w)
\end{bmatrix}
\right)
&\le&
-\lambda(V(\hat x, x))\\\notag
&&\hspace{-2cm}+\rho(|\hat u|)+ \mu(|w_1-\hat w_1|,\ldots,|w_p-\hat
w_p|).
\end{IEEEeqnarray}
for some fixed $\alpha,\lambda\in \mathcal{K}_\infty$, $\rho\in\mathcal{K}\cup
  \{0\}$ and MAF$_p$ $\mu$.

\end{definition}

Let us point out some differences between our definition of simulation
function and Definition~1 in~\cite{GP09}. Here, for the sake of a
simpler presentation, we
simply assume that for every $x$, $\hat x$, $\hat u$, $\hat w$ there
exists a $u$ so that \eqref{inequality1} holds for all $w$. While
in~\cite{GP09} the authors use an \emph{interface function} $k$ to provide
the input $u=k(x,\hat x, \hat u,\hat w)$ that
enforces~\eqref{inequality1}.
Moreover, in Definition~1 in~\cite{GP09} there is no
distinction between internal and external inputs and, therefore,
\mbox{$\mu(|w_1-\hat w_1|,\ldots,|w_p-\hat w_p|)$} does not appear on the
right-hand-side of~\eqref{inequality1}.  
Furthermore, we formulate the decay
condition~\eqref{inequality1} in ``dissipative'' form \cite{DIW11}, while in
\cite[Def.~1]{GP09} the decay condition is formulated in
``implication'' form \cite{DIW11}.

The following theorem shows the importance of the existence of a
simulation function according to Definition~\ref{d:sf}.

\begin{theorem}\label{theorem1}
Consider 
$\Sigma=\left( X,U,W,\mathcal{U},\mathcal{W},f,Y,h\right)$
and 
$\hat \Sigma=(\hat X,\hat U,\hat
W,\hat{\mathcal{U}},\hat{\mathcal{W}},\hat f,\hat Y,\hat h)$ with
$q=\hat q$ and $\hat p=p$.
Suppose $V$ is a simulation function from $\hat\Sigma$ to $\Sigma$.
Then, there exist a $\mathcal{KL}$ function $\beta$ and
$\mathcal{K}\cup\{0\}$ functions
$\gamma_{\mathrm{ext}}$, $\gamma_{\mathrm{int}}$, such that for any $x\in X$, $\hat x\in \hat X$,
$\hat\nu\in\hat{\mathcal{U}}$, $\hat \omega \in\hat{\mathcal{W}}$ there
exists $\nu\in{\mathcal{U}}$ so that for all $\omega \in\mathcal{W}$ and
$t\in\R_{\ge0}$ we have
\begin{IEEEeqnarray}{rCl}\label{inequality}
\begin{IEEEeqnarraybox}[][c]{l}
|\hat \zeta_{\hat x,\hat \nu,\hat\omega}(t)-\zeta_{x,\nu,\omega}(t)|
\le
\beta(V(\hat x, x),t)\\\qquad\qquad\qquad\qquad\qquad
+\:\gamma_{\mathrm{ext}}(||\hat \nu||_\infty)
+\gamma_{\mathrm{int}}(||\omega-\hat \omega||_\infty).
\end{IEEEeqnarraybox}
\end{IEEEeqnarray}
\end{theorem}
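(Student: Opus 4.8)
The plan is to reduce the claimed trajectory bound~\eqref{inequality} to a scalar differential inequality for $v(t):=V(\hat\xi_{\hat x,\hat\nu,\hat\omega}(t),\xi_{x,\nu,\omega}(t))$ and then to invoke a standard comparison argument of the kind used in input-to-state stability theory. First I would fix the abstract data $\hat x$, $\hat\nu$, $\hat\omega$ and build the external input $\nu\in\mathcal{U}$ pointwise: for each $t$, Definition~\ref{d:sf} applied to the current states $\hat\xi(t)$, $\xi(t)$ and the abstract inputs $\hat\nu(t)$, $\hat\omega(t)$ furnishes a value $u$ for which~\eqref{inequality1} holds for all $w$; choosing such a $u$ along the trajectory defines $\nu(t)$. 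Evaluating~\eqref{inequality1} at $w=\omega(t)$ and $\hat w=\hat\omega(t)$ and differentiating $v$ along the joint flow then yields, for almost every $t$,
\begin{IEEEeqnarray*}{rCl}
\mathsf{D}^{+}v(t)&\le&-\lambda(v(t))+\rho(|\hat\nu(t)|)\\
&&+\:\mu(|\omega_1(t)-\hat\omega_1(t)|,\ldots,|\omega_p(t)-\hat\omega_p(t)|).
\end{IEEEeqnarray*}

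Next I would collapse the two input terms into a single nonnegative quantity. As $\rho\in\mathcal{K}\cup\{0\}$ is monotone and $|\hat\nu(t)|\le\|\hat\nu\|_\infty$, the second term is bounded by $\rho(\|\hat\nu\|_\infty)$. For the aggregation term, each argument obeys $|\omega_i(t)-\hat\omega_i(t)|\le\|\omega-\hat\omega\|_\infty$, and the monotonicity~(ii) and radial unboundedness~(iii) of a monotone aggregation function make the map $\hat\mu(s):=\mu(s,\ldots,s)$ a class $\mathcal{K}_\infty$ function, so the third term is bounded by $\hat\mu(\|\omega-\hat\omega\|_\infty)$. Setting $\sigma:=\rho(\|\hat\nu\|_\infty)+\hat\mu(\|\omega-\hat\omega\|_\infty)$ we arrive at $\mathsf{D}^{+}v(t)\le-\lambda(v(t))+\sigma$.

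The core step is the comparison estimate. On the set where $v(t)\ge\lambda^{-1}(2\sigma)$ one has $\lambda(v(t))\ge2\sigma$ and hence $\mathsf{D}^{+}v(t)\le-\tfrac12\lambda(v(t))$; the standard $\mathcal{KL}$ estimate for the scalar system $\dot y=-\tfrac12\lambda(y)$ then produces a $\beta_0\in\mathcal{KL}$, depending only on $\lambda$, with
\begin{IEEEeqnarray*}{c}
v(t)\le\beta_0(v(0),t)+\lambda^{-1}(2\sigma).
\end{IEEEeqnarray*}
Using the weak triangle inequality $\kappa(a+b)\le\kappa(2a)+\kappa(2b)$, valid for any $\kappa\in\mathcal{K}$, the residual splits as $\lambda^{-1}(2\sigma)\le\lambda^{-1}(4\rho(\|\hat\nu\|_\infty))+\lambda^{-1}(4\hat\mu(\|\omega-\hat\omega\|_\infty))$, giving preliminary gains in $\mathcal{K}\cup\{0\}$ (the external gain vanishing when $\rho\equiv0$).

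Finally I would pass from $v$ to the output error. Inequality~\eqref{e:bsf:1} gives $|\hat\zeta_{\hat x,\hat\nu,\hat\omega}(t)-\zeta_{x,\nu,\omega}(t)|\le\alpha^{-1}(v(t))$, and distributing $\alpha^{-1}$ over the three-term bound for $v(t)$ with one more application of the weak triangle inequality yields~\eqref{inequality} with $\beta(s,t):=\alpha^{-1}(3\beta_0(s,t))\in\mathcal{KL}$ and with $\gamma_{\mathrm{ext}},\gamma_{\mathrm{int}}\in\mathcal{K}\cup\{0\}$ obtained by composing the preliminary gains with $\alpha^{-1}(3\,\cdot\,)$. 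I expect the main obstacle to lie not in the comparison estimate, which is routine once the scalar inequality is in hand, but in the pointwise construction of $\nu$ from the mere existence statement in Definition~\ref{d:sf}: one must ensure that the selected input is admissible, i.e.\ $\nu\in\mathcal{U}$, and that $v$ is absolutely continuous, so that the infinitesimal decay condition may legitimately be integrated to the trajectory-level bound.
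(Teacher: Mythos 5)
Your proposal is correct and follows essentially the same route as the paper's proof: derive the scalar dissipation inequality $\mathsf{D}^{+}v\le-\lambda(v)+\rho(\|\hat\nu\|_\infty)+\bar\mu(\|\omega-\hat\omega\|_\infty)$, split the analysis at the sublevel set $\{v\le\lambda^{-1}(2\sigma)\}$ (whose forward invariance the paper proves explicitly), invoke the comparison lemma of Lin--Sontag--Wang for the decaying regime, and push the resulting bound through $\alpha^{-1}$ with the weak triangle inequality. The two concerns you flag at the end — admissibility of the pointwise-selected $\nu$ and absolute continuity of $v$ — are exactly the points the paper handles by assumption and by the local Lipschitz continuity of $V$, respectively.
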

The proof, which is given in the appendix, follows the usual arguments that
are known from similar results in the context of
input-to-state Lyapunov functions, e.g. see~\cite{Son89}.

We need the following technical corollary later in the proof of Theorem~\ref{t:lin:nec}.
\begin{corollary}\label{c:2}
Given the assumptions of Theorem~\ref{theorem1}, there
exist a $\mathcal{KL}$ function $\beta$ and $\mathcal{K}\cup\{0\}$ functions
$\gamma_{\mathrm{ext}}$, $\gamma_{\mathrm{int}}$ such that 
for any
$\hat\nu\in\hat{\mathcal{U}}$, $\hat \omega \in\hat{\mathcal{W}}$,
$x\in X$, and $\hat x\in \hat X$ there exists
$\nu\in\mathcal{U}$ so that for every $\omega \in\mathcal{W}$ and
$t\in\R_{\ge0}$ we have
\begin{IEEEeqnarray}{rCl}\label{e:sf}
\begin{IEEEeqnarraybox}[][c]{l}
V(\hat \xi_{\hat x,\hat \nu,\hat \omega}(t), \xi_{x,\nu,\omega}(t))
\le
\beta(V(\hat x, x),t)\\\qquad\qquad\qquad\qquad\qquad
+\:\gamma_{\mathrm{ext}}(||\hat \nu||_\infty)
+\gamma_{\mathrm{int}}(||\omega-\hat \omega||_\infty),
\end{IEEEeqnarraybox}
\end{IEEEeqnarray}
where the $\mathcal{KL}$ function $\beta$
satisfies $\beta(r,0)=r$ for all $r\in\R_{\ge0}$.
\end{corollary}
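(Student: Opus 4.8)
The plan is to recognize that \eqref{e:sf} is exactly the intermediate estimate underlying the proof of Theorem~\ref{theorem1}: that proof first bounds the simulation function $V$ along the trajectories and only afterwards converts this bound into the output estimate \eqref{inequality} through the lower inequality \eqref{e:bsf:1}. Accordingly, I would establish \eqref{e:sf} directly and then observe that the extra normalization $\beta(r,0)=r$ is automatic for the $\beta$ produced by the comparison argument, precisely because no composition with $\alpha^{-1}$ enters here; it is that composition in Theorem~\ref{theorem1} which destroys the identity at $t=0$.

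Concretely, I would fix $\hat\nu$, $\hat\omega$, $x$, $\hat x$ and construct $\nu$ by selecting, at each time $t$, the external input $u$ whose existence is guaranteed by Definition~\ref{d:sf} for the current data $(\hat\xi(t),\xi(t),\hat\nu(t),\hat\omega(t))$; with this choice the decay inequality \eqref{inequality1} holds pointwise for every $w$, in particular for $w=\omega(t)$. Setting $v(t):=V(\hat\xi_{\hat x,\hat\nu,\hat\omega}(t),\xi_{x,\nu,\omega}(t))$, the local Lipschitz continuity of $V$ off $V_0$ together with forward completeness makes $v$ absolutely continuous, and \eqref{inequality1} yields, for almost every $t$,
\begin{IEEEeqnarray*}{rCl}
\dot v(t)&\le& -\lambda(v(t))+\rho(|\hat\nu(t)|)\\
&&+\:\mu(|\omega_1(t)-\hat\omega_1(t)|,\ldots,|\omega_p(t)-\hat\omega_p(t)|).
\end{IEEEeqnarray*}
Bounding each forcing term by its supremum norm --- using that $\rho\in\mathcal{K}$ is nondecreasing and that $\mu$ is monotone, so that $\mu(|\omega_1(t)-\hat\omega_1(t)|,\ldots,|\omega_p(t)-\hat\omega_p(t)|)\le\bar\mu(\|\omega-\hat\omega\|_\infty)$ with $\bar\mu(s):=\mu(s,\ldots,s)\in\mathcal{K}$ --- reduces the problem to the scalar differential inequality $\dot v\le -\lambda(v)+d$ with the constant $d:=\rho(\|\hat\nu\|_\infty)+\bar\mu(\|\omega-\hat\omega\|_\infty)$.

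The standard comparison argument for $\dot v\le -\lambda(v)+d$, as in the input-to-state Lyapunov reasoning of~\cite{Son89}, then gives $v(t)\le\beta(v(0),t)+\gamma(d)$, where $\beta\in\mathcal{KL}$ is obtained from the solution of $\dot y=-\lambda(y)$ with $y(0)=v(0)$ --- whence $\beta(r,0)=r$ --- and $\gamma\in\mathcal{K}_\infty$ is built from $\lambda$. Splitting the forcing term by the weak triangle inequality $\gamma(a+b)\le\gamma(2a)+\gamma(2b)$ and absorbing the inner maps, I set $\gamma_{\mathrm{ext}}:=\gamma(2\rho(\cdot))$ and $\gamma_{\mathrm{int}}:=\gamma(2\bar\mu(\cdot))$, both in $\mathcal{K}\cup\{0\}$ (with $\gamma_{\mathrm{ext}}\equiv 0$ if $\rho=0$), which delivers \eqref{e:sf} upon noting $v(0)=V(\hat x,x)$.

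The main obstacle is the construction of $\nu$ as an admissible input in $\mathcal{U}$: Definition~\ref{d:sf} only asserts the pointwise existence of a suitable $u$, so passing to a piecewise-continuous selection $\nu(t)$ --- equivalently, invoking an interface function as in~\cite{GP09} --- requires the usual measurable-selection care, after which the Dini-derivative bound in \eqref{inequality1} must be upgraded to the almost-everywhere derivative inequality used above. Both points are handled exactly as in the proof of Theorem~\ref{theorem1}, so the corollary may in fact be read off from the estimate on $V$ established there, retaining the $\beta$ prior to composition with $\alpha^{-1}$.
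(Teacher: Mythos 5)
Your proposal is correct and follows essentially the same route as the paper: the paper proves Corollary~\ref{c:2} by simply reading off the bound on $V$ established in the proof of Theorem~\ref{theorem1} before it is composed with $\alpha^{-1}$, taking $\beta=\bar\beta$ from the comparison lemma (Lemma~4.4 of~\cite{LSW96}, which gives $\bar\beta(r,0)=r$) and $\gamma_{\mathrm{ext}}(s)=\lambda^{-1}(4\rho(s))$, $\gamma_{\mathrm{int}}(s)=\lambda^{-1}(4\bar\mu(s))$, matching your $\gamma(2\rho(\cdot))$, $\gamma(2\bar\mu(\cdot))$ with $\gamma=\lambda^{-1}(2\,\cdot)$. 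Your closing observation that the corollary ``may be read off from the estimate on $V$ established there'' is precisely the paper's one-line proof, and the sublevel-set/forward-invariance argument you sketch is the same mechanism the paper uses to obtain the decomposition into a $\mathcal{KL}$ term plus gain terms.
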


The proof is provided in the appendix.

\begin{remark}\label{r:interface}
If we are given an \emph{interface function} $k$ that maps every $x$, $\hat x$, $\hat
u$ and $\hat w$ to an input $u=k(x,\hat x,\hat u,\hat w)$ so
that~\eqref{inequality1} is satisfied, then, the input $\nu\in
\mathcal{U}$ that realizes~\eqref{inequality} is readily given
by $\nu(t)=k(\xi(t),\hat\xi(t),\hat\nu(t),\hat\omega(t))$,
see~\cite[Thm.~1]{RZ15}.
\end{remark}
Given an interface function, 
we might exploit the usefulness of simulation functions as follows.
For various reasons (e.g. lower dimension) it might be easier to
synthesize a controller
for the system $\hat \Sigma$ enforcing some complex specifications,
  e.g. given as formulae in linear temporal logic \cite{BK08}, rather than for the original system
$\Sigma$. Then we can use the interface function $k$ to
\emph{transfer} or \emph{refine} the controller that we computed for
$\hat \Sigma$ to a controller for the system $\Sigma$ (cf. example in Section \ref{example}). In this context, we
refer to $\hat\Sigma$ as an \emph{approximate abstraction} and to $\Sigma$ as the
\emph{concrete} system. 
A quantification of the error
that is introduced in the design process by taking the detour through
the abstraction is given by~\eqref{inequality}. A uniform error bound
can be obtained by bounding the difference of the initial states (measured in
terms of $V(\hat x,x)$) together with bounds on the infinity norms of $\hat\nu$ and
$\omega-\hat \omega$.

\begin{remark}
In case that a control system does not have internal inputs, the 
definition~\eqref{e:sys} reduces to $\left( X, U,\mathcal{U},f,Y,h\right)$ and the vector
field becomes $f:X\times U\to\R^n$. 
Correspondingly, the definition of simulation functions
  simplifies, i.e., 
in~\eqref{inequality1} we do not quantify the inequality over
$w,\hat w$ and the term \mbox{$\mu(|w_1-\hat w_1|,\ldots,|w_p-\hat
w_p|)$} is omitted.
Similarly, the results in Theorem~\ref{theorem1} and Corollary~\ref{c:2}
are modified, i.e., 
inequalities~\eqref{inequality} and~\eqref{e:sf} are not quantified
over $\omega,\hat \omega\in\mathcal{W}$ and the term
$\gamma_{\mathrm{int}}(||\omega-\hat \omega||_\infty)$ is omitted.
\end{remark}

\section{Compositionality Result}
\label{s:inter}

In this section, we analyze interconnected control systems and show
how to construct an approximate abstraction of an
interconnected system and the corresponding simulation function from the abstractions of the subsystems and their corresponding simulation functions, respectively. The
definition of the interconnected control system is based on the notion of
interconnected systems introduced in~\cite{TI08}.

\subsection{Interconnected Control Systems}

We consider $N\in\N_{\ge1}$ control systems
\begin{IEEEeqnarray*}{c'c}
\Sigma_i=\left(X_i,U_i,W_i,\mathcal{U}_i,\mathcal{W}_i,f_i,Y_i,h_i\right),
&i\in\intcc{1;N}
\end{IEEEeqnarray*}
with partitioned internal inputs and outputs
\begin{IEEEeqnarray}{c'c}\label{e:iop}
\begin{IEEEeqnarraybox}[][c]{c,c}
w_i=(w_{i1};\ldots;w_{i(i-1)};w_{i(i+1)};\ldots;w_{iN}), \\
y_i=(y_{i1};\ldots;y_{iN}),
\end{IEEEeqnarraybox}
\end{IEEEeqnarray}
with $w_{ij}\in W_{ij}\subseteq \R^{p_{ij}}$, $y_{ij}\in Y_{ij}\subseteq
\R^{q_{ij}}$,
and output function
\begin{IEEEeqnarray}{c'c}\label{e:of}
h_i(x_i)=(h_{i1}(x_i);\ldots;h_{iN}(x_i)),
\end{IEEEeqnarray}
as depicted schematically in Figure~\ref{system}.
\begin{figure}[ht]
\begin{center}
\begin{tikzpicture}[>=latex',scale=1]
  \draw[thick]  (0,0) rectangle node {$\Sigma_i$} (1.8,1);

  \draw[->]   (-1,.9) node[left] {\footnotesize $u_i$} -- (0,.9);
  \draw[->]   (-1,.7) node[left] {\footnotesize $w_{i1}$} -- (0,.7);
  \draw[->]   (-1,.1) node[left] {\footnotesize $w_{iN}$} -- (0,.1);
  \path   (-1,.5) -- node {$\vdots$} (0,.5);

  \draw[->]   (1.8,.9) -- node[very near end,right] {\footnotesize $y_{i1}$} (2.8,.9);
  \draw[->]   (1.8,.7) -- node[very near end,right] {\footnotesize $y_{i2}$} (2.8,.7);
  \draw[->]   (1.8,.1) -- node[very near end,right] {\footnotesize $y_{iN}$} (2.8,.1);
  \path       (1.8,.5) -- node {$\vdots$}        (2.8,.5);

\end{tikzpicture}
\end{center}
\vspace{-0.3cm}
\caption{Input/output configuration of subsystem~$\Sigma_i$.}
\label{system}
\vspace{-0.3cm}
\end{figure}
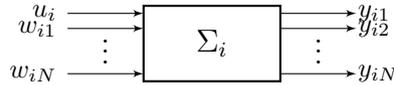

We interpret the outputs $y_{ii}$ as \emph{external} outputs, whereas the outputs $y_{ij}$ with $i\neq j$ are \emph{internal}
outputs which are used to define the
interconnected systems. In particular, we assume that the
dimension of $w_{ij}$ is equal to the dimension of $y_{ji}$, i.e.,
the following \emph{interconnection constraints} hold:
\begin{IEEEeqnarray}{c'c'c}\label{e:ic}
 \forall i,j\in\intcc{1;N},~i\neq j:& q_{ij}=p_{ji},& Y_{ij}\subseteq W_{ji}.
\end{IEEEeqnarray}
If there is no connection from subsystem $\Sigma_{i}$ to
$\Sigma_j$, we simply set $h_{ij}\equiv 0$.

\begin{definition}\label{d:ics}
Consider $N\in\N_{\ge1}$ control systems $
\Sigma_i=\left(X_i,U_i,W_i,\mathcal{U}_i,\mathcal{W}_i,f_i,Y_i,h_i\right)$,
$i\in\intcc{1;N}$, with the input-output structure given
by~\eqref{e:iop}-\eqref{e:ic}. The \emph{interconnected control
system} $\Sigma=\left(X,U,\mathcal{U},f,Y,h\right)$,
denoted by
$\mathcal{I}(\Sigma_1,\ldots,\Sigma_N)$, is given by 
$X:=X_1\times\cdots\times X_N$, $U:=U_1\times\cdots\times U_N$,
$Y:=Y_{11}\times\cdots\times Y_{NN}$ and functions
\begin{IEEEeqnarray*}{c}
f(x,u):=(f_1(x_1,u_1,w_1);\ldots;f_N(x_N,u_N,w_N)),\\
h(x):=(h_{11}(x_1);\ldots;h_{NN}(x_N)),
\end{IEEEeqnarray*}
where $u=(u_{1};\ldots;u_{N})$ and $x=(x_{1};\ldots;x_{N})$ and
with the interconnection variables constrained by
$w_{ij}=y_{ji}$ for all $i,j\in\intcc{1;N},i\neq j$.
\end{definition}

An example of an interconnection of two control subsystems $\Sigma_1$
and $\Sigma_2$ is illustrated in Figure \ref{system1}.
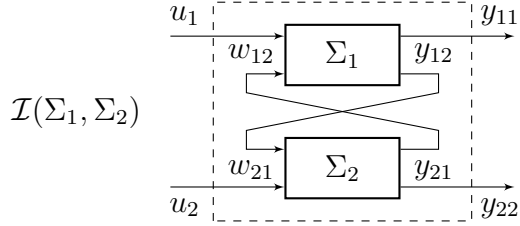
\begin{figure}[ht]
\begin{tikzpicture}[>=latex']
\tikzstyle{block} = [draw, 
                     thick,
                     rectangle, 
                     minimum height=.8cm, 
                     minimum width=1.5cm]

  \node at (-3.5,-0.75) {$\mathcal{I}(\Sigma_1,\Sigma_2)$};

  \draw[dashed] (-1.7,-2.2) rectangle (1.7,.7);

  \node[block] (S1) at (0,0) {$\Sigma_1$};
  \node[block] (S2) at (0,-1.5) {$\Sigma_2$};

  \draw[->] ($(S1.east)+(0,0.25)$) -- node[very near end,above] {$y_{11}$} ($(S1.east)+(1.5,.25)$);
  \draw[<-] ($(S1.west)+(0,0.25)$) -- node[very near end,above] {$u_{1}$} ($(S1.west)+(-1.5,.25)$);

  \draw[->] ($(S2.east)+(0,-.25)$) -- node[very near end,below] {$y_{22}$} ($(S2.east)+(1.5,-.25)$);
  \draw[<-] ($(S2.west)+(0,-.25)$) -- node[very near end,below] {$u_{2}$} ($(S2.west)+(-1.5,-.25)$);

  \draw[->] 
    ($(S1.east)+(0,-.25)$) -- node[very near end,above] {$y_{12}$} 
    ($(S1.east)+(.5,-.25)$) --
    ($(S1.east)+(.5,-.5)$) --
    ($(S2.west)+(-.5,.5)$) --
    ($(S2.west)+(-.5,.25)$) -- node[very near start,below] {$w_{21}$}
    ($(S2.west)+(0,.25)$) ;

  \draw[->] 
    ($(S2.east)+(0,.25)$) -- node[very near end,below] {$y_{21}$} 
    ($(S2.east)+(.5,.25)$) --
    ($(S2.east)+(.5,.5)$) --
    ($(S1.west)+(-.5,-.5)$) --
    ($(S1.west)+(-.5,-.25)$) -- node[very near start,above] {$w_{12}$}
    ($(S1.west)+(0,-.25)$) ;

\end{tikzpicture}
\vspace{-0.2cm}
\caption{Interconnection of two control subsystems $\Sigma_1$ and $\Sigma_2$.}
\label{system1}
\vspace{-0.3cm}
\end{figure}

\subsection{Compositional Construction of Approximate Abstractions and Simulation Functions}

In this subsection, we assume that we are given $N$ subsystems
$\Sigma_i=\left(X_i,U_i,W_i,\mathcal{U}_i,\mathcal{W}_i,f_i,Y_i,h_i\right),$
together with their abstractions 
$\hat\Sigma_i=(\hat X_i,\hat U_i,\hat W_i,\hat{\mathcal{U}}_i,\hat{\mathcal{W}_i},\hat f_i,\hat Y_i,\hat h_i)$
 and the simulation functions $V_i$ from
$\hat\Sigma_i$ to $\Sigma_i$,
with the associated comparison functions denoted
by $\alpha_i$, $\lambda_i$, $\rho_{i}$ and  $\mu_i$. We assume that
the arguments of $\mu_i$ are partitioned according to the interconnection
scheme, i.e., $\mu_i\in$ MAF$_{N-1}$ and the internal inputs appear in~\eqref{inequality1}
for $\Delta w_{ij}:=|w_{ij}-\hat w_{ij}|$
according to
\begin{IEEEeqnarray}{c}\label{e:maf:i}
\mu_i(\Delta w_{i1},\ldots,\Delta w_{i(i-1)},\Delta w_{i(i+1)},\ldots,\Delta
w_{iN}).
\IEEEeqnarraynumspace
\end{IEEEeqnarray}
We follow~\cite{DIW11} and use an operator $\Gamma:\R_{\ge0}^N\to
\R^N_{\ge0}$ to formulate a small gain condition.
Each component $\Gamma_i$ with $r_j:=\alpha_j^{-1}(s_j)$ is given by
\begin{IEEEeqnarray}{c}\label{e:Goperator}
\Gamma_i(s):=
\begin{cases}
\mu_1(r_2,\ldots,r_N)&i=1\\
\mu_i(r_1,\ldots,r_{i-1},r_{i+1},\ldots,r_N),&i\in\intoo{1;N}\\
\mu_N(r_1,\ldots,r_{N-1})&i=N.\\
\end{cases}
\IEEEeqnarraynumspace
\end{IEEEeqnarray}
For $\varepsilon_i\in\mathcal{K}_\infty$, $i\in\intcc{1;N}$  we introduce
$D:\R^N_{\ge0}\to \R^N_{\ge0}$ with 
$D(s)=(s_1+\varepsilon_1(s_1);\ldots ;s_N+\varepsilon_N(s_N))$
as well as $\Lambda^{-1}:\R^N_{\ge0}\to \R^N_{\ge0}$ by
$\Lambda^{-1}(s)=(\lambda_1^{-1}(s_1);\ldots ;\lambda_N^{-1}(s_N))$. The nonlinear
\emph{small gain condition} is given by $D\circ \Gamma\circ
\Lambda^{-1}\not\ge \id$, i.e., for any $s\in\R_{>0}^N$, at least one component of $D\circ \Gamma\circ
\Lambda^{-1}(s)$ is strictly less than the corresponding component of~$s$. One
of the main results in~\cite{DashkovskiyRuefferWirth10} shows that if $D\circ
\Gamma\circ \Lambda^{-1}$ is irreducible and satisfies the small gain condition,
then there exist $\mathcal{K}_\infty$ functions $\sigma_i$, $i\in\intcc{1;N}$
so that 
$\sigma(r)=(\sigma_1(r);\ldots;\sigma_N(r))$ satisfies\footnote{We interpret the
inequality~\eqref{e:path}
component-wise, i.e.,
for $x\in\R^N$ we have $x<0$ iff every entry $x_i<0$,
  $i\in\intcc{1;N}$.}
\begin{IEEEeqnarray}{c't'c}\label{e:path}
  D\circ \Gamma \circ \Lambda^{-1}(\sigma(r))< \sigma(r) & for all &
  r\in\R_{>0}.
\end{IEEEeqnarray}
Subsequently, we term $N$ $\mathcal{K}_\infty$ functions $\sigma_i$ that
satisfy~\eqref{e:path} for some $D$ as
\emph{$\Omega$-path}~\cite[Def.~5.1]{DashkovskiyRuefferWirth10}.

Suppose that $N=2$ and $\alpha_i=\id$ for $i\in\intcc{1;2}$. The
small gain condition requires  that there exist $\varepsilon_i\in \mathcal{K}_\infty$ so
that either 
$(\id +\varepsilon_1)\circ \mu_{1}\circ \lambda_2^{-1}(s_2)<s_1 $ 
or 
$(\id +\varepsilon_2)\circ \mu_{2}\circ \lambda_1^{-1}(s_1)<s_2 $
holds for all $s\in\R^2_{>0}$. This
follows, e.g. by the small gain condition used in \cite{JiangMareelsWang96}
\begin{IEEEeqnarray}{c,c}\label{e:sg:JiangMareelsWang96}
  \exists_{\varepsilon\in\R_{>0}}\forall_{r\in\R_{>0}}&(1 +\varepsilon) \mu_{2}\circ
\lambda_1^{-1} \circ (1 +\varepsilon)\mu_{1}\circ \lambda_2^{-1}(r)<r.
\IEEEeqnarraynumspace
\end{IEEEeqnarray}
The main technical result in \cite{JiangMareelsWang96}, which enables the small
gain theorem, shows that \eqref{e:sg:JiangMareelsWang96} implies the existence
of $\sigma_2\in \mathcal{K}_\infty$ so that $(1+\varepsilon)\mu_{2}\circ
\lambda_1^{-1} (r) < \sigma_2(r) < \lambda_2\circ \mu_{1}^{-1}(\frac{r}{1
+\varepsilon})$ holds  for all $r>0$. It is easy to check that
$\sigma(s)=(s_1,\sigma_2(s_2))$
satisfies~\eqref{e:path}. In the context of simulation functions,
condition~\eqref{e:sg:JiangMareelsWang96} ensures that the output mismatch propagated through the
interconnected systems is not amplified.
For general interconnected systems, the small gain condition can also be interpreted
as the requirement that the ``loop-gains'' associated with the cycles of the
interconnection graph are strictly less than one,
see~\cite[Sec.~8.4]{DashkovskiyRuefferWirth10}.

If the functions $\alpha_i$, $\mu_i$ and $\lambda_i$ are linear,
the existence of an $\Omega$-path 
follows from $\Gamma \Lambda^{-1}$ having spectral radius strictly less than
one~\cite[Thm.~5.1]{DashkovskiyRuefferWirth10}. In this case, the right eigenvector  $\eta\in\R^N_{>0}$
associated with the spectral radius has positive entries, and it follows that 
$D\Gamma \Lambda^{-1} \eta< \eta$ for some appropriately picked  $\varepsilon_i>0$.
Hence, 
$\sigma(r)=(\eta_1r;\ldots;\eta_Nr)$, is an $\Omega$-path.

In the following theorem, similar to~\cite[Thm~4.5]{DIW11}, we use the technical
assumption on the derivative $(\sigma_i^{-1}\circ \lambda_i)'$ of the functions
$\sigma_i^{-1}\circ \lambda_i$ which reads
\begin{IEEEeqnarray}{c}\label{e:sigmalambda}
\begin{IEEEeqnarraybox}[][c]{l}
\forall_{i\in\intcc{1;N}}
\forall_{\kappa\in\mathcal{K}_\infty}\exists_{\underline
\kappa\in\mathcal{K}_\infty}\forall_{r\in\R_{>0}}:
\underline\kappa(r)\le \kappa(r)(\sigma_i^{-1}\circ \lambda_i)'(r)\\
\forall_{i\in\intcc{1;N}}
\forall_{\kappa\in\mathcal{K}}\exists_{\overline\kappa\in\mathcal{K}}\forall_{r\in\R_{>0}}:
\kappa(r)(\sigma_i^{-1}\circ \lambda_i)'(r)\le
\overline\kappa(r).
\end{IEEEeqnarraybox}
\IEEEeqnarraynumspace
\end{IEEEeqnarray}
\begin{theorem}\label{t:ic}
Consider the interconnected control system
$\Sigma=\mathcal{I}(\Sigma_1,\ldots,\Sigma_N)$ induced by
$N\in\N_{\ge1}$
control subsystems~$\Sigma_i$.
Suppose that for each subsystem $\Sigma_i$, we are given $\hat
\Sigma_i$ together with a simulation
function $V_i$ from $\hat \Sigma_i$ to $\Sigma_i$ with comparison functions
$\alpha_i$, $\lambda_i$, $\rho_{i}$ and $\mu_i$.
Suppose that there exists an $\Omega$-path
$\sigma$ and for every $i\in\intcc{1;N}$ $\sigma_i^{-1}\circ
\lambda_i$ is differentiable on $\R_{>0}$ and~\eqref{e:sigmalambda}
holds.
Then 
\begin{IEEEeqnarray}{c}\label{e:icSF}
V(\hat x, x)=\max_{i\in\intcc{1;N}}\{\sigma_i^{-1}\circ \lambda_i\circ V_i(\hat x_i, x_i)\}
\end{IEEEeqnarray}
is a simulation function from $\hat \Sigma=\mathcal{I}(\hat
\Sigma_1,\ldots,\hat\Sigma_N)$ to $\Sigma$. 
\end{theorem}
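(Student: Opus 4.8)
The plan is to verify directly that the function $V$ in \eqref{e:icSF} satisfies the two requirements of Definition~\ref{d:sf} for the pair $(\hat\Sigma,\Sigma)$. Since the interconnected system $\Sigma=\mathcal{I}(\Sigma_1,\ldots,\Sigma_N)$ carries no internal input, only the output inequality \eqref{e:bsf:1} and a decay estimate of the form $\mathsf{D}^{+}V\le-\lambda(V)+\rho(|\hat u|)$ need to be established. Throughout I abbreviate $g_i:=\sigma_i^{-1}\circ\lambda_i$, a $\mathcal{K}_\infty$ function that is differentiable by hypothesis, and write $v:=V(\hat x,x)=\max_i g_i(V_i(\hat x_i,x_i))$. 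The external input is chosen componentwise: the interconnection fixes the internal inputs through $w_{ij}=h_{ji}(x_j)$ and $\hat w_{ij}=\hat h_{ji}(\hat x_j)$, so for each $i$ I apply the simulation-function property of $V_i$ to the data $\hat x_i,\hat u_i,\hat w_i$, obtaining $u_i$ for which \eqref{inequality1} holds for every $w_i$, hence in particular for the realized value $w_i=(h_{ji}(x_j))_{j\neq i}$. Setting $u:=(u_1;\ldots;u_N)$ supplies the input required in Definition~\ref{d:sf}.

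For the output inequality I would use that $h,\hat h$ collect only the external outputs $h_{ii},\hat h_{ii}$, that $|\hat h_{ii}(\hat x_i)-h_{ii}(x_i)|\le|\hat h_i(\hat x_i)-h_i(x_i)|$, and that $V\ge g_i(V_i)$ yields $V_i\le\lambda_i^{-1}\circ\sigma_i(v)$ for each $i$. Combining these with $\alpha_i(|\hat h_i-h_i|)\le V_i$ and bounding the Euclidean norm of the concatenated output mismatch shows that the function $\alpha$ defined by $\alpha^{-1}(r):=\big(\sum_i(\alpha_i^{-1}\circ\lambda_i^{-1}\circ\sigma_i(r))^2\big)^{1/2}$ is a $\mathcal{K}_\infty$ function satisfying \eqref{e:bsf:1}.

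The core of the argument is the decay estimate. Using the standard rule for the Dini derivative of a pointwise maximum, I would bound $\mathsf{D}^{+}V\le\max_{i\in I}\mathsf{D}^{+}(g_i\circ V_i)$ over the active set $I=\{i:g_i(V_i)=v\}$, and the differentiability of $g_i$ gives $\mathsf{D}^{+}(g_i\circ V_i)=g_i'(V_i)\,\mathsf{D}^{+}V_i$. Into $\mathsf{D}^{+}V_i$ I substitute \eqref{inequality1}, estimate each internal-input term by $\Delta w_{ij}\le\alpha_j^{-1}(V_j)\le\alpha_j^{-1}\circ\lambda_j^{-1}\circ\sigma_j(v)$, and use monotonicity of $\mu_i$ to recognize the aggregate gain as the $i$-th component $(\Gamma\circ\Lambda^{-1})_i(\sigma(v))$. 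For an active index $\lambda_i(V_i)=\sigma_i(v)$, so the $\Omega$-path property \eqref{e:path}, namely $D\circ\Gamma\circ\Lambda^{-1}(\sigma(v))<\sigma(v)$, turns the raw estimate into $\mathsf{D}^{+}V_i\le-\bar\lambda_i(V_i)+\rho_i(|\hat u_i|)$ with $\bar\lambda_i\in\mathcal{K}_\infty$ assembled from $\varepsilon_i$ and $\lambda_i$.

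The hard part will be converting the weighted inequality $g_i'(V_i)\big(-\bar\lambda_i(V_i)+\rho_i(|\hat u_i|)\big)$ into the dissipative form $-\lambda(V)+\rho(|\hat u|)$, because the factor $g_i'(V_i)$ multiplies both the decay and the input gain and is in general neither bounded nor constant. This is exactly where assumption \eqref{e:sigmalambda} enters: its first inequality (with $\kappa=\bar\lambda_i$) furnishes a $\mathcal{K}_\infty$ lower bound $g_i'(V_i)\bar\lambda_i(V_i)\ge\underline\kappa_i(V_i)$, which after the substitution $V_i=g_i^{-1}(v)$ produces a genuine $\mathcal{K}_\infty$ decay $\lambda_i^*(v)$; its second inequality precludes a blow-up of $g_i'$ near $0$, so that on a case distinction — decay dominates, i.e. $\bar\lambda_i(V_i)\ge2\rho_i(|\hat u_i|)$, versus input dominates — the weighted input term is controlled by a $\mathcal{K}$ function $\rho^{(i)}$ of $|\hat u_i|$ alone. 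Collecting both cases gives, per active index, $\mathsf{D}^{+}(g_i\circ V_i)\le-\tfrac12\lambda_i^*(v)+\rho^{(i)}(|\hat u_i|)$; taking the maximum over $I$ and setting $\lambda:=\tfrac12\min_i\lambda_i^*$ and $\rho(|\hat u|):=\sum_i\rho^{(i)}(|\hat u|)$ completes the estimate. I expect the bookkeeping in this last step, together with making the maximum rule and the chain rule rigorous at points where $I$ is not a singleton or where some $V_i$ vanishes, to be the only genuinely delicate points.
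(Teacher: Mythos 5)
Your proposal follows essentially the same route as the paper's proof: the same componentwise choice of $u$ with the internal inputs fixed by the interconnection, the same max-rule and chain-rule bound on $\mathsf{D}^+V$ (the paper's Lemma~\ref{l:chainrule}), the same estimate $\Delta w_{ij}\le \alpha_j^{-1}(V_j)$ leading to $\Gamma\circ\Lambda^{-1}(\sigma(V))$, the identity $\lambda_i(V_i)=\sigma_i(V)$ on the active set combined with the $\Omega$-path property, and the same use of~\eqref{e:sigmalambda} to absorb the factor $(\sigma_i^{-1}\circ\lambda_i)'(V_i)$. The only divergences are minor: the paper bounds the output mismatch via $\sqrt{N}\max_i$ rather than your sum of squares, and it disposes of the weighted input term in one step through the second inequality of~\eqref{e:sigmalambda}, setting $\rho=\max_i\overline\kappa_i\circ\rho_i$, where you propose a decay-versus-input case distinction that is, if anything, the more careful treatment of the same point.
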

\begin{proof}
We follow the arguments in~\cite[Thm.~4.5]{DIW11}.
Let us first point out, that $\sigma_i^{-1}\circ \lambda_i$ and $V_i$ being
differentiable and locally Lipschitz, respectively, implies 
that $V$ is locally Lipschitz. 
Let us show inequality \eqref{e:bsf:1} for
\mbox{$x=(x_1;\ldots;x_N)\in X$} and 
$\hat x=(\hat x_1;\ldots;\hat x_N)\in \hat X$. We derive
\begin{IEEEeqnarray*}{rCl}
\vert \hat h(\hat x)-h(x) \vert
&\le&
\sqrt{N} \max_{i\in\intcc{1;N}} \vert \hat h_{ii}(\hat x_i)-h_{ii}(x_i) \vert\\
&\le&
\sqrt{N}\max_{i\in\intcc{1;N}}  \alpha_i^{-1}\circ V_i(\hat x_i, x_i)\\
&\le&
\bar\alpha(\max_{i\in\intcc{1;N}} \sigma^{-1}_i\circ \lambda_i\circ V_i(\hat
x_i, x_i))=
\bar\alpha( V(\hat x, x))
\end{IEEEeqnarray*}
where 
$\bar\alpha(r)=\sqrt{N}\max_i\alpha^{-1}_i\circ\lambda_i^{-1}\circ \sigma_i(r)$
which is a $\mathcal{K}_\infty$ function and \eqref{e:bsf:1} holds with
$\alpha=\bar\alpha^{-1}$. 

We continue with showing \eqref{inequality1}. Let $z,v\in \hat X\times X$. Using a straightforward extension of~\cite[Thm.~1]{MilgromSegal02}, we obtain
\begin{IEEEeqnarray}{c}\label{e:ic:p1}
\mathsf{D}^+V(z,v)\le\max \{\mathsf{D}^+(\sigma_i^{-1}\circ\lambda_i\circ
V_i)(z,v)\mid i\in I(z)\}
\IEEEeqnarraynumspace
\end{IEEEeqnarray}
where $I(z)=\{i\in\intcc{1;N}\mid V(z)=\sigma_i^{-1}\circ\lambda_i\circ
V_i(z)\}$. Moreover, by Lemma~\ref{l:chainrule} in the appendix, we have 
\begin{IEEEeqnarray}{c}\label{e:ic:p2}
\mathsf{D}^+(\sigma_i^{-1}\circ\lambda_i\circ V_i)(z,v)\le
(\sigma_i^{-1}\circ\lambda_i)'(V_i(z))\mathsf{D}^+ V_i(z,v).
\IEEEeqnarraynumspace
\end{IEEEeqnarray}
We fix 
$x=(x_1;\ldots;x_N)$,
$\hat x=(\hat x_1;\ldots;\hat x_N)$,
in $\hat X\times X\smallsetminus V_0$, $\hat u=(\hat u_{1};\ldots;\hat u_{N})\in\hat U$ and
$u=(u_{1};\ldots;u_{N})\in U$,
where we pick $u_i$ to satisfy~\eqref{inequality1} 
with the internal inputs given by $w_{ij}=h_{ji}(x_j)$ and $\hat
w_{ij}=\hat h_{ji}(\hat x_j)$. We define $\Delta w_{ij}:=|w_{ij}-\hat w_{ij}|$, $\Delta y_{ji}:=|y_{ji}-\hat y_{ji}|$
and $V_{\mathrm{vec}}=(V_1;\ldots; V_N)$, then we get
\begin{IEEEeqnarray*}{l}
\mu_i(\Delta w_{i1},\ldots,\Delta w_{i(i-1)},\Delta w_{i(i+1)},\ldots,\Delta w_{iN})\\
=\mu_i(\Delta y_{1i},\ldots,\Delta y_{(i-1)i},\Delta y_{(i+1)i},\ldots,\Delta
y_{Ni})\\
\le\mu_i(\alpha_1^{-1}(V_1),\ldots,\alpha_{i-1}^{-1}(V_{i-1}),\alpha_{i+1}^{-1}(V_{i+1}),\ldots,\alpha_N^{-1}(V_N))\\
\le\Gamma_i(V_{\mathrm{vec}}).
\end{IEEEeqnarray*}
Moreover, we see that $\Gamma_i(V_{\mathrm{vec}})$ equals  
\begin{IEEEeqnarray*}{rCl}
\Gamma_i\circ \Lambda^{-1}(
\sigma_1\circ \sigma_1^{-1}\circ \lambda_1(V_1);\ldots;\sigma_N\circ\sigma_N^{-1}\circ \lambda_N(V_N)).
\end{IEEEeqnarray*}
Using~\eqref{e:path}, i.e., $\Gamma\circ \Lambda^{-1}(\sigma(r))<D^{-1}\circ
\sigma(r)$, and~\eqref{e:icSF} we obtain a bound of~\eqref{e:maf:i} by
$(\id+\varepsilon_i)^{-1}\circ\sigma_i(V)$.
Let us slightly abuse notation and use
$V_i$ and $\mathsf{D}^+V_i$,  for $V_i(\hat
x_i,x_i)$ and  $\mathsf{D}^+V_i((\hat x_i,x_i),(\hat f_i(\hat x_i,\hat u_i,\hat w_i),f_i(x_i,u_i,w_i)))$. 
Similarly, we simplify the notation for $V$ and $\mathsf{D}^+V$.
Let $i\in I((\hat x, x))$, then we compute 
\begin{IEEEeqnarray*}{rCl}
\mathsf{D}^+V_i
&\le&
-\lambda_i(V_i)+(\id+\varepsilon_i)^{-1}\circ\sigma_i(V)+\rho_{i}(|\hat u_i|)\\
&\le&
-\sigma_i(V)+(\id+\varepsilon_i)^{-1}\circ\sigma_i(V)+\rho_{i}(|\hat u_i|)\\
&\le&
-\varepsilon_i\circ (\id +\varepsilon_i)^{-1}\circ\sigma_i(V)+\rho_{i}(|\hat u_i|).
\end{IEEEeqnarray*}
Using~\eqref{e:sigmalambda}, it follows that there exist $\underline\kappa_i\in\mathcal{K}_\infty$
 and $\overline\kappa_i\in\mathcal{K}\cup\{0\}$ so that 
$\underline\kappa_i\circ\tfrac{\varepsilon_i}{\id +\varepsilon_i}\circ\sigma_i(r)\le
(\sigma_i^{-1}\circ \lambda_i)'(r)\tfrac{\varepsilon_i}{\id
+\varepsilon_i}\circ\sigma_i(r)$
and
$(\sigma_i^{-1}\circ \lambda_i)'(r)\rho_{i}(r)\le
 \overline\kappa_i\circ \rho_{i}(r)$ 
holds for all $r\in\R_{>0}$. We define $\lambda\in\mathcal{K}_\infty$ and
$\rho\in\mathcal{K}\cup\{0\}$ by
\begin{IEEEeqnarray*}{c,c}
\lambda(r)=\min_{i\in\intcc{1;N}}\{ \underline\kappa_i\circ\tfrac{\varepsilon_i}{\id
+\varepsilon_i}\circ\sigma_i(r)\},&
\rho(r)=\max_{i\in\intcc{1;N}}\{
  \overline\kappa_i\circ\rho_{i}(r)\}.
\end{IEEEeqnarray*}
Using~\eqref{e:ic:p1} and~\eqref{e:ic:p2} we get
$\mathsf{D}^+V\le-\lambda(V)+\rho(|\hat u|)$ which completes the proof.
\end{proof}
\begin{remark}\label{e:comp:bound:lin}
In the linear case, with $\sigma(r)=(\eta_1r;\ldots;\eta_Nr)$ we get $V(\hat
x,x)=\max_i\tfrac{\lambda_i}{\eta_i}V_i(\hat x_i,x_i)$ with
$\lambda=\min_i\{\lambda_i\tfrac{\varepsilon_i}{1+\varepsilon_i}\}$ and
$\rho_{}=\max_i\{\rho_{i}\tfrac{\lambda_i}{\eta_i}\}$,
where we abuse notation and identify linear functions $\alpha(r)=a r$ with their
coefficients, i.e., $\alpha=a$.
\end{remark}

\section{Approximate Abstractions and\\ Simulation Functions for Linear Systems}
\label{s:lin}

In this section, we focus on \emph{linear} control systems $\Sigma$ and
\emph{square-root-of-quadratic} simulation functions $V$. In the first
part, we follow the geometric approach to linear control systems, and 
characterize simulation functions for
\emph{linear} control systems $\Sigma$ in terms of \emph{controlled invariant externally
stabilizable subspaces}~\cite{BM92}. The results
are closely connected to the characterization of simulation
relations developed in~\cite{vdS04}.  
In the second part, we use the characterization of simulation
functions to actually construct abstractions
of linear control subsystems whose existence was assumed in the first part of the paper.

\subsection{Characterization of Simulation Functions}

A \emph{linear control system} is defined as a control system with 
the vector field and output function given by the following linear maps
\begin{IEEEeqnarray}{c}\label{e:lin:sys}
\begin{IEEEeqnarraybox}[][c]{rCl}
\dot \xi(t)&=&A\xi(t)+B\nu(t)+D\omega(t),\\
\zeta(t)&=&C\xi(t),
\end{IEEEeqnarraybox}
\end{IEEEeqnarray}
with the state space, external input space, internal input space and
output space given by $\R^n$, $\R^m$, $\R^p$ and $\R^q$, respectively.
The dimensions of the matrices follow by
\begin{IEEEeqnarray}{c,c,c,t,c}\label{e:lin:mat}
A\in\R^{n\times n},&
B\in\R^{n\times m},&
D\in\R^{n\times p},& and &
C\in\R^{q\times n}.
\end{IEEEeqnarray}
Henceforth, we simply use the tuple
$\Sigma=(A,B,C,D)$
to refer to a control system with vector field and output function of
the form of~\eqref{e:lin:sys} with the dimension of the corresponding matrices
specified by~\eqref{e:lin:mat}. 
As the co-domain of the internal and external inputs are
implicitly determined by the dimension of $B$ and~$D$, we do not
include the sets $\mathcal{U}$ and $\mathcal{W}$ in the system
tuple.

In the following we characterize simulation functions from
$\Sigma_1=(A_1,B_1,C_1,D_1)$ to $\Sigma_2=(A_2,B_2,C_2,D_2)$ in terms
of the auxiliary matrices given by
\begin{IEEEeqnarray}{c}\label{e:auxmat}
\begin{IEEEeqnarraybox}[][b]{lc;c;c;c}
A_{12}=\begin{bmatrix} A_1& 0\\ 0 & A_2 \end{bmatrix},&
B_{12}=\begin{bmatrix} 0\\ B_2\end{bmatrix},& 
B_{21}=\begin{bmatrix} B_1\\ 0\end{bmatrix},&
D_{12}=\begin{bmatrix} D_1\\ D_2\end{bmatrix},\\
 C_{12}=\begin{bmatrix} -C_1& C_2 \end{bmatrix}.
\end{IEEEeqnarraybox}
\end{IEEEeqnarray}

\begin{theorem}[Necessity]\label{t:lin:nec}
Consider two linear control systems $\Sigma_i=(A_i,B_i,C_i,D_i)$, $i\in\{1,2\}$ with
the same internal input space dimension and the same output space dimension.
Let the matrices $A_{12}, B_{12},B_{21},C_{12},D_{12}$ be given
by~\eqref{e:auxmat}.
Suppose there exists a simulation function $V$ from
$\Sigma_1$ to $\Sigma_2$, then 
there exists a relation
$R\subseteq \R^{n_1}\times \R^{n_2}$
which is a subspace that satisfies
  \begin{IEEEeqnarray}{s}
    \IEEEyesnumber
    \label{e:lin:nec}
    \IEEEyessubnumber
    \label{e:lin:nec:estab}
    $R$ is $(A_{12},B_{12})$-externally stabilizable\\
    \IEEEyessubnumber
    \label{e:lin:nec:cinv}
    $A_{12}R\subseteq R+\im B_{12}$\\
    \IEEEyessubnumber
    \label{e:lin:nec:D}
    $\im D_{12}\subseteq R +\im B_{12}$\\
    \IEEEyessubnumber
    \label{e:lin:nec:ker}
    $R\subseteq \ker C_{12}$.
  \end{IEEEeqnarray}
If the function 
$\rho$ associated with $V$ equals
to zero, then
  \begin{IEEEeqnarray}{c}
    \IEEEyessubnumber
    \label{e:lin:nec:exactSR}
    \im B_{21}\subseteq R+\im B_{12}.
  \end{IEEEeqnarray}
\end{theorem}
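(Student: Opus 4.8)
My plan is to take $R$ to be the zero level set of $V$ and to read off every geometric condition from the trajectory estimate of Corollary~\ref{c:2}; working with trajectories rather than with $\mathsf{D}^+V$ directly is what lets me avoid the non-differentiability of the square-root-of-quadratic $V$ on its zero set. Writing the square-root-of-quadratic $V$ as $V(z)^2=z^\top M z$ with $M=M^\top\succeq 0$ and $z=(x_1;x_2)$, I set $R:=V_0=\ker M$, which is a linear subspace of $\R^{n_1}\times\R^{n_2}$ and hence a legitimate relation. Condition~\eqref{e:lin:nec:ker} is then immediate: for $z\in R$, inequality~\eqref{e:bsf:1} gives $\alpha(|C_{12}z|)\le V(z)=0$, whence $C_{12}z=0$.

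The engine of the argument is an invariance observation. Fix $z=(x_1;x_2)\in R$, feed $\Sigma_1$ a constant external input $\hat u=u_1$, and choose matched constant internal inputs $\hat w=w=w_1=w_2$. Corollary~\ref{c:2}, together with $\beta(0,t)=0$ and $\gamma_{\mathrm{int}}(0)=0$, supplies an input $\nu_2$ for $\Sigma_2$ with $V(\xi_1(t),\xi_2(t))\le\gamma_{\mathrm{ext}}(|u_1|)$ for all $t$, where $(\xi_1,\xi_2)$ is the resulting joint trajectory. When $u_1=0$ the bound is $0$, so the trajectory stays in the closed subspace $R$; the one-sided derivative at $t=0^+$ exists because the motion is driven by a piecewise-continuous input, and, $R$ being a subspace, it lies in $R$. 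This gives the master inclusion $A_{12}z+B_{12}\,\nu_2(0^+)+D_{12}w\in R$, valid for every $z\in R$ and every constant $w$. Taking $w=0$ yields $A_{12}z\in R+\im B_{12}$ for all $z\in R$, i.e.~\eqref{e:lin:nec:cinv}; taking $z=0\in R$ yields $D_{12}w\in R+\im B_{12}$ for all $w$, i.e.~\eqref{e:lin:nec:D}.

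For~\eqref{e:lin:nec:exactSR} I rerun the same computation keeping $u_1$ free. Since $\rho\equiv 0$ forces $\gamma_{\mathrm{ext}}\equiv 0$ in Corollary~\ref{c:2}, the joint trajectory stays in $R$ even for $u_1\neq 0$, so the $t=0^+$ derivative places $A_{12}z+B_{21}u_1+B_{12}\,\nu_2(0^+)+D_{12}w$ in $R$; with $z=0$ and $w=0$ this reads $B_{21}u_1\in R+\im B_{12}$ for every $u_1$, which is~\eqref{e:lin:nec:exactSR}.

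The remaining and, I expect, hardest condition is external stabilizability~\eqref{e:lin:nec:estab}. By~\eqref{e:lin:nec:cinv}, $R$ is already $(A_{12},B_{12})$-controlled invariant, so only the induced dynamics on the quotient $Q:=(\R^{n_1}\times\R^{n_2})/R$ must be stabilized. Because $R=\ker M$, the form $M$ descends to a positive definite form on $Q$, so $V$ induces a genuine norm of $\pi(z)$, with $\pi$ the canonical projection. Applying Corollary~\ref{c:2} with $u_1=0$ and $w=0$, from any $z_0$ there is an input $\nu_2$ acting through $B_{12}$ with $V(\xi_1(t),\xi_2(t))\le\beta(V(z_0),t)\to 0$, hence $\pi(z(t))\to 0$. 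Thus every state of the induced quotient pair $(\bar A,\bar B)$, where $\bar A$ is the map induced by $A_{12}$ modulo $R$ and $\im\bar B=\pi(\im B_{12})$, can be asymptotically driven to the origin. For linear systems asymptotic null-controllability is equivalent to stabilizability by static linear feedback, so $(\bar A,\bar B)$ is stabilizable; lifting a stabilizing gain to a friend $K$ of $R$ produces $(A_{12}+B_{12}K)R\subseteq R$ with $(A_{12}+B_{12}K)|_{Q}$ Hurwitz, which is~\eqref{e:lin:nec:estab}. The delicate step is exactly this last bridge: turning the nonlinear, open-loop decay of the simulation function into a static linear stabilizer on the quotient, and verifying that the quotient pair $(\bar A,\bar B)$ is the correct object whose stabilizability encodes external stabilizability of $R$.
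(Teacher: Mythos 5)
Your overall route coincides with the paper's: take $R$ to be (the span of) the zero set of $V$, apply Corollary~\ref{c:2} with $\nu_1\equiv 0$ and matched internal inputs to get invariance of $R$ and asymptotic convergence to $R$, and differentiate the invariant trajectory at $t=0^+$ to extract the algebraic inclusions \eqref{e:lin:nec:cinv}, \eqref{e:lin:nec:D} and \eqref{e:lin:nec:exactSR}. However, there is one genuine gap: the theorem is stated for an \emph{arbitrary} simulation function $V$ in the sense of Definition~\ref{d:sf}, whereas you assume from the outset that $V$ is square-root-of-quadratic and set $R=\ker M$. For a general continuous, locally Lipschitz $V$ the zero level set $V_0$ need not be a subspace, so your candidate $R$ is not even a legitimate relation of the required type, and none of your subsequent steps (in particular the claim that $M$ descends to a positive definite form on the quotient, which you use to convert $V\to 0$ into $\pi(z(t))\to 0$) survive. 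The paper closes this gap by taking $R$ to be the \emph{smallest subspace containing} $V_0$ and transferring the invariance and convergence properties from $V_0$ to $R$ via linearity of solutions of linear systems. This generality is not cosmetic: it is exactly what makes Theorems~\ref{t:lin:nec} and~\ref{t:lin:suf} together say that whenever \emph{any} simulation function exists, a square-root-of-quadratic one does too.

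Two smaller points. First, your derivation of \eqref{e:lin:nec:estab} passes to the quotient $Q=(\R^{n_1}\times\R^{n_2})/R$ and speaks of ``the map $\bar A$ induced by $A_{12}$ modulo $R$''; this map is not well defined, since you only know $A_{12}R\subseteq R+\im B_{12}$ and not $A_{12}R\subseteq R$. You must first fix a friend $K_0$ with $(A_{12}+B_{12}K_{0})R\subseteq R$ and quotient $A_{12}+B_{12}K_0$ instead; the open-loop trajectory then projects to a trajectory of that quotient pair with input $\nu_2-K_0 z$, after which your null-controllability argument goes through and reproduces what the paper obtains directly from \cite[Def.~4.1.6 and Prp.~4.1.14]{BM92}. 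Second, your replacement of the fundamental lemma of the geometric approach by the elementary observation that the right derivative of a trajectory confined to a closed subspace lies in that subspace is fine, provided you note that $\nu_2$ is only guaranteed to be piecewise continuous, so the one-sided limit $\nu_2(0^+)$ exists but the argument should be run at a point of right-continuity (or almost everywhere, as the paper does).
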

\begin{proof}[Proof of Theorem~\ref{t:lin:nec}]
Let $R\subseteq \R^{n_1}\times \R^{n_2}$ be the smallest subspace in
$\R^{n_1\times n_2}$ that contains the set $S=\{(x_1;x_2)\mid
V(x_1,x_2)=0\}$. By definition, any element of $R$ 
follows by applying scalar multiplication and addition to elements in
$S$, and therefore, we obtain $R\subseteq \ker C_{12}$.
Now let $\nu_1\equiv 0$ and
$\omega_1=\omega_2$. Choose $x_1, x_2$ such that 
$V(x_1, x_2)=0$, 
then it follows from Corollary~\ref{c:2} that there exists
$\nu_2$ such that 
$V(\xi_{1,x_1,\nu_1,\omega_1}(t),\xi_{2,x_2,\nu_2,\omega_2}(t))=0$ holds for all $t\in\R_{\ge0}$. By the linearity of solutions of
linear systems, we have $(x_1,x_2)\in R$ implies that there exists
$\nu_2$ such that 
\mbox{$(\xi_{1,x_1,\nu_1,\omega_1}(t),\xi_{2,x_2,\nu_2,\omega_2}(t))\in R$}
holds for all $t\in\R_{\ge0}$,
which shows that $R$ is $(A_{12},B_{12})$-controlled invariant,
see~\cite[Thm.~4.1.1]{BM92} and~\eqref{e:lin:nec:cinv} follows.
As the choice of $x_1$, $x_2$ and $\omega_1$ is
arbitrary, we invoke the fundamental lemma of the geometrical
approach~\cite[Lem.~3.2.1]{BM92} and obtain that for every $x_1$, $x_2$, $w_1$ there is
$u_2$ so that
$(A_1 x_1+D_1w_1,A_2x_2+D_2w_1+B_2u_2)\in R$. By setting $x_1=x_2=0$, we
obtain~\eqref{e:lin:nec:D}.
We
continue to show~\eqref{e:lin:nec:estab}. For $\nu_1\equiv 0$,
$\omega_1=\omega_2$, Corollary~\ref{c:2}
implies that for every $(x_1,x_2)\in \R^{n_1}\times \R^{n_2}$,
there exists
$\nu_2\in\mathcal{U}_2$ such that $\lim_{t\to \infty} V(\xi_{1,x_1,\nu_1,\omega_1}(t),\xi_{2,x_2,\nu_2,\omega_2}(t))=0$, which implies that $(\xi_{1,x_1,\nu_1,\omega_1}(t),\xi_{2,x_2,\nu_2,\omega_2}(t))$ converges to $R$. Then
using~\cite[Def.~4.1.6 and Prp.~4.1.14]{BM92} we conclude that
$R$ is $(A_{12},B_{12})$-externally stabilizable.

We continue with~\eqref{e:lin:nec:exactSR}.
Let $\omega_1=\omega_2\equiv 0$. Since \mbox{$\gamma_{\mathrm{ext}}\equiv
0$}, we use the same arguments as above, and obtain that for every 
\mbox{$(x_1,x_2)\in R$} and \mbox{$\nu_1\in{\mathcal{U}_1}$} there is
\mbox{$\nu_2\in\mathcal{U}_2$} so that 
\mbox{$(\xi_{1,x_1,\nu_1,\omega_1}(t),\xi_{2,x_2,\nu_2,\omega_2}(t))\in R$}
holds for all $t\in\R_{\ge0}$.
By the fundamental lemma of the geometric approach~\cite[Lem.~3.2.1]{BM92}, we have that 
$(\dot{ \xi}_{1,x_1,\nu_1,\omega_1}(t),\dot
\xi_{2,x_2,\nu_2,\omega_2}(t))\in R$ for almost all $t\in\R_{\ge0}$.
This implies, for every $(x_1,x_2)\in R$ and $u_1\in
\R^{m_1}$ there exists $u_2\in \R^{m_2}$ so that  $(A_1x_1+ B_1
u_1,A_2x_2+B_2u_2)\in R$, which concludes the proof.
\end{proof}

\begin{theorem}[Sufficiency]\label{t:lin:suf}
Consider two linear control systems $\Sigma_i=(A_i,B_i,C_i,D_i)$,
$i\in\{1,2\}$ with
the same internal input space dimension and the same output space dimension.
Let the matrices $A_{12}, B_{12},B_{21},C_{12},D_{12}$ be given by~\eqref{e:auxmat}.
Suppose there exists a linear subspace
 $R\subseteq \R^{n_1}\times \R^{n_2}$
that satisfies~\eqref{e:lin:nec:estab}-\eqref{e:lin:nec:ker}, then 
there exists a symmetric positive semi-definite matrix 
$M\in\R^{(n_1+n_2)\times(n_1+n_2)}$
so that 
\begin{IEEEeqnarray*}{c}
  V(x_1, x_2)=\big((x_1; x_2)^\top M(x_1; x_2)\big)^{\tfrac{1}{2}}
\end{IEEEeqnarray*}
is a simulation function
from $\Sigma_1$ to $\Sigma_2$.
If additionally~\eqref{e:lin:nec:exactSR} holds,
then the function 
$\rho$ associated with $V$ equals
to zero, i.e., $\rho\equiv0$.
\end{theorem}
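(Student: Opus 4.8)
The plan is to realize $V$ as a square-root-of-quadratic $V(x_1,x_2)=\big((x_1;x_2)^\top M(x_1;x_2)\big)^{1/2}$ whose associated quadratic form vanishes exactly on $R$ and serves as a Lyapunov function for the dynamics that a suitable feedback induces on the quotient $\R^{n_1+n_2}/R$. The interface input $u$ will then be chosen so that all components of the stacked dynamics that $V$ cannot tolerate are pushed into $R$, where they are invisible to $M$.

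First I would use the geometric hypotheses to fix a feedback. By \eqref{e:lin:nec:cinv} the subspace $R$ is $(A_{12},B_{12})$-controlled invariant, so by \cite[Thm.~4.1.1]{BM92} there is a matrix $K$ with $(A_{12}+B_{12}K)R\subseteq R$; by the external stabilizability \eqref{e:lin:nec:estab} and \cite[Def.~4.1.6 and Prp.~4.1.14]{BM92}, $K$ can be chosen so that the map $\bar A$ induced by $\tilde A:=A_{12}+B_{12}K$ on the quotient $\R^{n_1+n_2}/R$ is Hurwitz. Writing $\pi$ for the canonical projection onto the quotient (so $\pi\tilde A=\bar A\pi$) and solving the Lyapunov equation $\bar A^\top\bar P+\bar P\bar A=-I$ for some $\bar P\succ 0$, I set $M:=\pi^\top\bar P\pi$. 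Then $M$ is symmetric positive semi-definite with $\ker M=\ker\pi=R$, so $V$ is continuous and smooth (hence locally Lipschitz) off $V_0=\{V=0\}=R$, as Definition~\ref{d:sf} requires.

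With $x:=(x_1;x_2)$, inequality \eqref{e:bsf:1} is immediate: by \eqref{e:lin:nec:ker} $C_{12}$ factors as $C_{12}=\bar C\pi$, and since $|C_{12}x|=|\hat h(x_1)-h(x_2)|$, a spectral estimate yields a linear $\alpha(r)=cr$ (any linear $\alpha$ if $\bar C=0$). For the decay \eqref{inequality1} I rewrite the stacked dynamics as $\dot x=A_{12}x+B_{21}\hat u+B_{12}u+D_{12}\hat w+E(w-\hat w)$ with $E=\left[\begin{smallmatrix}0\\ D_2\end{smallmatrix}\right]$, where $\hat w,w$ are the internal inputs of $\Sigma_1,\Sigma_2$. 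Using \eqref{e:lin:nec:D} I split $D_{12}\hat w=r_0(\hat w)+B_{12}g(\hat w)$ with $r_0(\hat w)\in R$ and choose $u:=Kx-g(\hat w)$. Since $\tilde AR\subseteq R$ and $r_0(\hat w)\in R=\ker M$, these $R$-components are annihilated by $M$, and for $V>0$ differentiating gives
\begin{IEEEeqnarray*}{rCl}
\mathsf{D}^+V &=& \frac{x^\top M\dot x}{V}\\
&=& \frac{(\pi x)^\top\bar P\bar A(\pi x)+x^\top MB_{21}\hat u+x^\top ME(w-\hat w)}{V}.
\end{IEEEeqnarray*}
The Lyapunov identity $\bar P\bar A+\bar A^\top\bar P=-I$ bounds the first term by $-\tfrac{1}{2\lambda_{\max}(\bar P)}V$, and Cauchy--Schwarz turns the remaining two into a linear $\rho(|\hat u|)$ and a linear MAF$_p$ term $\mu$ in $(|w_1-\hat w_1|,\ldots,|w_p-\hat w_p|)$. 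The point $V=0$ on $R$ is handled separately via $\mathsf{D}^+V(x,\dot x)=|M^{1/2}\dot x|$ together with $\tilde AR,\,r_0(\hat w)\subseteq R$. Finally, if \eqref{e:lin:nec:exactSR} holds I also split $B_{21}\hat u=r_1(\hat u)+B_{12}g_1(\hat u)$ with $r_1(\hat u)\in R$ and take $u:=Kx-g(\hat w)-g_1(\hat u)$, routing the external-input term into $R$ as well; this removes the $\rho$-term and gives $\rho\equiv0$.

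The hard part is not any single estimate but the bookkeeping of the feedback: one must simultaneously choose $K$ so that the induced quotient map $\bar A$ is Hurwitz, ensure $\ker M=R$ exactly so that every direction the interface pushes into $R$ is annihilated by the quadratic form, and use the intertwining identity $\pi\tilde A=\bar A\pi$ to convert $x^\top M\tilde Ax$ into the quotient Lyapunov decrement. Once these three facts are in place, the remaining bounds reduce to routine spectral-norm estimates and all comparison functions come out linear, as expected for a square-root-of-quadratic simulation function.
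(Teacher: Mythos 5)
Your proposal is correct and follows essentially the same route as the paper: pick a friend feedback $K$ making $R$ invariant with a Hurwitz induced map on the quotient, pull back a Lyapunov matrix for the quotient dynamics through the projection to define $M$ (the paper realizes your abstract projection $\pi$ concretely as $\Pi=\bar T_2$ from a coordinate change adapted to $R$), and choose the interface input so that the $D_{12}\hat w$ term (and, under~\eqref{e:lin:nec:exactSR}, the $B_{21}\hat u$ term) lands in $R=\ker M$, with Cauchy--Schwarz supplying linear $\rho$ and $\mu$. The only cosmetic deviations are that the paper additionally normalizes $\bar C^\top\bar C\le\bar M$ to obtain $\alpha=\id$ rather than a generic linear $\alpha$, and it inserts the gain-minimizing term $K_4\hat u$ into the interface even when~\eqref{e:lin:nec:exactSR} fails; neither affects correctness.
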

\begin{proof}[Proof of Theorem~\ref{t:lin:suf}]
We pick $K_{12}$ so that $(A_{12}+B_{12}K_{12})R\subseteq R$
and $(A_{12}+B_{12}K_{12})|_{(\R^{n_1}\times \R^{n_2})/R}$ is
Hurwitz. Let $A=(A_{12}+B_{12}K_{12})$, from~\cite[Proof of Thm~3.2.1 and
Def.~3.2.4]{BM92} it follows that for any invertible matrix
$T=[T_1\;T_2]$ with
$\im T_1=R$ and $T^{-1}=[\bar T_1^\top\; \bar T_2^\top]^\top$ we obtain
\begin{IEEEeqnarray}{c} \label{e:lin:suf:structure}
\begin{IEEEeqnarraybox}[][c]{c}
T^{-1}A T=
\begin{bmatrix}
F_{11} & F_{12}\\
0 & F_{22}
\end{bmatrix}, \text{$F_{22}$ is Hurwitz, }
F_{22} \bar T_2=\bar T_2 A.
\end{IEEEeqnarraybox}
\IEEEeqnarraynumspace
\end{IEEEeqnarray}
For the remainder, we use $\bar A=F_{22}$ and $\Pi=\bar T_2$. 
Let $x\in \ker \bar T_2$, then we compute $x=TT^{-1}x=T_1y$ for
$y=\bar T_1x$ and it follows that $\ker \Pi\subseteq R$. Since
$R\subseteq \ker C_{12}$, we obtain $\ker \Pi\subseteq \ker
C_{12}$ and there exists $\bar C$ so that $\bar C\Pi=C_{12}$.  As $\bar A$ is Hurwitz, there exist a constant $\lambda\in\R_{>0}$
and a symmetric positive
definite matrix $\bar M$, so that 
\begin{IEEEeqnarray}{l}\label{e:t:lin:suf:1}
\begin{IEEEeqnarraybox}[][c]{l}
\bar C^\top \bar C\le \bar M\\
\bar A^\top \bar M+\bar M\bar A\le -2\lambda \bar M.
\end{IEEEeqnarraybox}
\end{IEEEeqnarray}
We define $V:\R^{n_1}\times \R^{n_2}\to \R_{\ge0}$ by
\begin{IEEEeqnarray*}{l}
  V(x_1,x_2)=\big((x_1;x_2)^\top \Pi^\top \bar M \Pi (x_1;x_2)\big)^{\tfrac{1}{2}}.
\end{IEEEeqnarray*}
Clearly $M=\Pi^\top \bar M\Pi$ is symmetric positive semi-definite and it remains to
show that $V$ is indeed a simulation
function from $\Sigma_1$ to $\Sigma_2$. First, we
verify that~\eqref{e:bsf:1} holds for $\alpha= \id$ by
\begin{IEEEeqnarray*}{rCl}
|C_1x_1-C_2x_2|^2&=&|C_{12}(x_1;x_2)|^2 =|\bar C\Pi(x_1;x_2)|\\
&\le& (x_1;x_2)^\top \Pi^\top\bar M\Pi(x_1;x_2)= V(x_1,x_2)^2.
\end{IEEEeqnarray*}
We continue to show that~\eqref{inequality1} holds as well. Let $x_1$, $x_2$, $u_1$ and
$w_1$ be given. Then we pick $u_2=K_{12}(x_1;x_2)+K_4u_1+u_3$ where we
pick $u_3$ so
that $D_{12} w_1+B_{12} u_{3}\in R$ holds,
which is possible by~\eqref{e:lin:nec:D}. The purpose of $K_4u_1$ will
become apparent later.
Let $x=(x_1;x_2)$, then for any $w_2$
the left-hand-side of~\eqref{inequality1} evaluates to 
\begin{IEEEeqnarray*}{c}
\frac{x^\top \Pi^\top \bar M \Pi}{V(x_1,x_2)}
\bigg[
A x
+
D_{12} w_1+ B_{12}u_3
+
\begin{bmatrix}B_1\\B_2K_4 \end{bmatrix}
u_1
+
\begin{bmatrix}0\\D_2 \end{bmatrix}
\Delta w
\bigg]
\end{IEEEeqnarray*}
with $\Delta w=(w_1- w_2)$.
We use $\bar A \Pi=\Pi A$
and~\eqref{e:t:lin:suf:1} to bound the first term~by
\begin{IEEEeqnarray*}{c}
\frac{x^\top \Pi^\top \bar M \Pi}{V(x_1,x_2)}
Ax
\le -\lambda 
\frac{x^\top \Pi^\top \bar M \Pi x}{V(x_1,x_2)}
=-\lambda V(x_1,x_2).
\end{IEEEeqnarray*}
Moreover,
$\Pi(D_{12} w_1+B_{12}u_3)=0$
as
$D_{12} w_1+B_{12}u_3\in R$. Then we use the Cauchy-Schwarz
inequality to bound
\begin{multline*}
\frac{x^\top \Pi^\top \bar M \Pi}{V(x_1,x_2)}
\bigg(
\begin{bmatrix}B_1\\B_2K_4 \end{bmatrix}
u_1
+
\begin{bmatrix}0\\D_2 \end{bmatrix}
\Delta w
\bigg)\le \\
|\sqrt{\bar M}\Pi\begin{bmatrix}B_1\\B_2K_4 \end{bmatrix}|
|u_1|
+
|\sqrt{\bar M}\Pi\begin{bmatrix}0\\D_2 \end{bmatrix}|
|w_2-w_1|
\end{multline*}
and we see that $V$ is a simulation function with the associated
comparison functions given by $\alpha= \id$ and for all $r\in\R_{\ge0}$ and
$s\in \R^p_{\ge0}$ by 
$\lambda(r)=\lambda r$, 
\begin{IEEEeqnarray*}{c}\label{e:gains}
\begin{IEEEeqnarraybox}[][c]{c,t,c}
\rho(r)=
|\sqrt{\bar M}\Pi\begin{bmatrix}B_1\\B_2K_4 \end{bmatrix}|r\;& and &
\mu(s)=\sum_{i=1}^p
|\sqrt{\bar M}\Pi\begin{bmatrix}0\\D_2 \end{bmatrix}|s_i.
\end{IEEEeqnarraybox}
\end{IEEEeqnarray*}
If $\im B_{21}\subseteq R+\im  B_{12}$, for every $u_1$  we choose
$u_2$ differently by $u_2=K_{12}(x_1;x_2)+u_3+u_4$ with
$u_{4}$ so that $B_{21} u_1+ B_{12}u_4\in R$ which implies
$\Pi(B_{21} u_1+B_{21}u_{4})=0$ and the term of the left-hand-side
of~\eqref{inequality1} associated with $u_1$ vanishes.
\end{proof}

Theorem~\ref{t:lin:suf} gives rise to the following definition.
\begin{definition}\label{d:srsf}
Let  $\Sigma_i=(A_i,B_i,C_i,D_i)$, $i\in\{1,2\}$ be 
two linear control systems with
the same internal input space dimension and the same output space dimension.
Let the matrices $A_{12}, B_{12},C_{12},D_{12}$ be given
by~\eqref{e:auxmat}.
We say that a relation
  $R\subseteq \R^{n_1}\times \R^{n_2}$
\emph{induces a simulation function from $\Sigma_1$ to $\Sigma_2$} if
it satisfies~\eqref{e:lin:nec:estab}-\eqref{e:lin:nec:ker}.
\end{definition}

Theorems~\ref{t:lin:nec} and~\ref{t:lin:suf} facilitate a direct
comparison of  simulation
functions with the notion of a 
simulation relation $R$ from $\Sigma_1$ to
$\Sigma_2$~\cite{vdS04}. A relation $R\subseteq
\R^{n_1}\times\R^{n_2}$ is a \emph{simulation relation
from $\Sigma_1$ to $\Sigma_2$} if for every $(x_1,x_2)\in R$,
$\nu_1$ and $\omega_1\equiv\omega_2$, there exists $\nu_2$ so that
\begin{IEEEeqnarray}{c}\label{e:SR}
\begin{IEEEeqnarraybox}[][c]{c'l}
\forall_{t\in\R_{\ge0}}:&(\xi_{1,x_1,\nu_1,\omega_1}(t),\xi_{2,x_2,\nu_2,\omega_2}(t))\in R\\
\forall_{t\in\R_{\ge0}}:&\zeta_{1,x_1,\nu_1,\omega_1}(t)=\zeta_{2,x_2,\nu_2,\omega_2}(t).
\end{IEEEeqnarraybox}
\end{IEEEeqnarray}
This notion of simulation relation was introduced in~\cite{vdS04} in
the context of \emph{verification} for linear systems with two types of
inputs. Due to the verification context, in~\cite{vdS04} the internal
input is interpreted as control input and the external inputs as
disturbances. While in our approach, we use the external input as
control input that we refine from $\Sigma_1$ to $\Sigma_2$ and the
internal input is used for the interconnection of the subsystems.
Nevertheless, mathematically, both notions are closely related and
the authors in~\cite{vdS04} characterized simulation relations from
$\Sigma_1$ to $\Sigma_2$ in terms of
conditions~\eqref{e:lin:nec:cinv}-\eqref{e:lin:nec:exactSR}. On one
hand, two systems $\Sigma_1$ and $\Sigma_2$ that are related via a simulation function (or equivalently a relation that induces a simulation
function) needs to satisfy~\eqref{e:lin:nec:exactSR} only if
$\rho\equiv 0$ should hold.
As a result, given an output trajectory
$\zeta_{1,x_1,\nu_1,\omega_1}(t)$ of $\Sigma_1$, there does not necessarily exist an output
trajectory 
$\zeta_{2,x_2,\nu_2,\omega_2}(t)$ of $\Sigma_2$ so that both trajectories are
identical. On the other
hand, a simulation relation $R$ is not required to be externally
stabilizable~\eqref{e:lin:nec:estab}. The external stabilizability
in the context of simulation functions
allows $\zeta_{1,x_1,\nu_1,\omega_1}(t)$ and
$\zeta_{2,x_2,\nu_2,\omega_2}(t)$ 
to be driven by the different internal inputs $\omega_1\not\equiv
\omega_2$ and  the initial states are not restricted to satisfy $(x_1,x_2)\in
R$. In view of~\eqref{inequality} the effect of the different internal
inputs
on the output difference is
bounded and the effect of the freely chosen initial states vanishes over time.

We conclude this subsection with the characterization of a relation inducing a simulation function from $\Sigma_1$
to $\Sigma_2$ (or from $\Sigma_2$ to $\Sigma_1$)  that is defined in
terms of a matrix $P\in \R^{n_2\times
n_1}$ by
\begin{IEEEeqnarray}{c}\label{e:lin:relP}
  R=\{(x_1;x_2)\in \R^{n_1}\times \R^{n_2}\mid Px_1 =x_2\}.
\end{IEEEeqnarray}
We use this result in the next subsection to construct an
approximate abstraction
$\hat \Sigma$ of a given linear control system~$\Sigma$.

\begin{theorem}\label{t:lin:construction}
Consider two linear control systems $\Sigma_i=(A_i,B_i,C_i,D_i)$, $i\in\{1,2\}$
with the same internal input space dimension and the same output space dimension.
Let $R$ be given by~\eqref{e:lin:relP}
with the matrix $P\in \R^{n_2\times
n_1}$. The relation $R$ induces a simulation function from $\Sigma_1$
to $\Sigma_2$
iff there exists matrices $K_1,K_2,K_3$ of appropriate dimensions
so that the following holds
\begin{IEEEeqnarray}{l}
\IEEEyesnumber
\IEEEyessubnumber
\label{e:lin:const:1:a}
A_2+B_2K_1\text{ is Hurwitz}\\
\IEEEyessubnumber
\label{e:lin:const:1:b}
A_2P= PA_1+B_2 K_2\\
\IEEEyessubnumber
\label{e:lin:const:1:c}
D_2= P D_1+B_2K_3\\
\IEEEyessubnumber
\label{e:lin:const:1:d}
C_1= C_2P.
\end{IEEEeqnarray}
Moreover, \eqref{e:lin:nec:exactSR} holds iff
there exists $K_4$ so that
\begin{IEEEeqnarray}{c}
\IEEEyessubnumber
\label{e:lin:const:1:e}
PB_1=B_2K_4.
\end{IEEEeqnarray}
\end{theorem}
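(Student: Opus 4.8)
The plan is to use the explicit form $R=\im\begin{bmatrix}I\\P\end{bmatrix}$ to translate each of the four geometric conditions \eqref{e:lin:nec:estab}--\eqref{e:lin:nec:ker} from Definition~\ref{d:srsf} into a matrix identity, reading off the matrices $K_1,K_2,K_3$ along the way. With the auxiliary matrices from \eqref{e:auxmat} one has $R+\im B_{12}=\{(y_1;Py_1+B_2v)\mid y_1,v\}$, and three of the four conditions are then essentially immediate. The inclusion $R\subseteq\ker C_{12}$ reads $-C_1x_1+C_2Px_1=0$ for all $x_1$, i.e.\ $C_1=C_2P$, which is \eqref{e:lin:const:1:d}. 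The controlled invariance \eqref{e:lin:nec:cinv}, i.e.\ $(A_1x_1;A_2Px_1)\in R+\im B_{12}$ for all $x_1$, amounts to $A_2P-PA_1\in\im B_2$, i.e.\ the solvability \eqref{e:lin:const:1:b}. Likewise \eqref{e:lin:nec:D}, i.e.\ $(D_1w;D_2w)\in R+\im B_{12}$, amounts to $D_2-PD_1\in\im B_2$, i.e.\ \eqref{e:lin:const:1:c}. Each equivalence is an ``iff'' because $M\in\im B_2$ (column-wise) is equivalent to the existence of a matrix $K$ with $M=B_2K$.

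The only condition requiring real work is external stabilizability \eqref{e:lin:nec:estab}, which I would handle by the coordinate change $e:=x_2-Px_1$, i.e.\ by conjugating with $T=\begin{bmatrix}I&0\\P&I\end{bmatrix}$, under which $R$ becomes the coordinate subspace $\{e=0\}$. Writing a state feedback as $K=[K^{(1)}\;K^{(2)}]$ so that $A_{12}+B_{12}K=\begin{bmatrix}A_1&0\\B_2K^{(1)}&A_2+B_2K^{(2)}\end{bmatrix}$, a direct computation of $T^{-1}(A_{12}+B_{12}K)T$ gives the block-lower-triangular form whose $(e,x_1)$ block equals $-PA_1+B_2K^{(1)}+(A_2+B_2K^{(2)})P$ and whose $(e,e)$ block equals $A_2+B_2K^{(2)}$. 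Hence $R$ is $(A_{12},B_{12})$-invariant under $K$ iff the $(e,x_1)$ block vanishes, and in that case the map induced on the quotient $(\R^{n_1}\times\R^{n_2})/R$ is represented exactly by $A_2+B_2K^{(2)}$. Therefore external stabilizability holds iff some $K^{(2)}$ makes $A_2+B_2K^{(2)}$ Hurwitz, which is \eqref{e:lin:const:1:a} with $K_1=K^{(2)}$.

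To close the equivalence I would show that the Hurwitz requirement \eqref{e:lin:const:1:a} and the invariance requirement \eqref{e:lin:const:1:b} can be decoupled into two independent matrices. For the \emph{if} direction, given $K_1$ and $K_2$ I would set $K^{(2)}=K_1$ and $K^{(1)}=-(K_2+K_1P)$; substituting into the $(e,x_1)$ block and using \eqref{e:lin:const:1:b} makes it vanish, while the quotient map stays $A_2+B_2K_1$, so $R$ is externally stabilizable and, together with \eqref{e:lin:const:1:c}--\eqref{e:lin:const:1:d}, satisfies Definition~\ref{d:srsf}. For the \emph{only if} direction I would read off $K_1:=K^{(2)}$ from any stabilizing feedback and $K_2:=-(K^{(1)}+K^{(2)}P)$ from the vanishing of the $(e,x_1)$ block. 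Finally, the exactness statement unwinds in the same style: $\im B_{21}\subseteq R+\im B_{12}$ means $(B_1u;0)\in R+\im B_{12}$ for all $u$, which forces $PB_1u\in\im B_2$, i.e.\ $\im(PB_1)\subseteq\im B_2$, equivalent to the existence of $K_4$ with $PB_1=B_2K_4$, i.e.\ \eqref{e:lin:const:1:e}. The main obstacle is the bookkeeping in the external-stabilizability step: correctly identifying the quotient representation as $A_2+B_2K^{(2)}$ and verifying that the separate feedbacks $K_1,K_2$ recombine into a single admissible $K$; the remaining conditions are routine image computations.
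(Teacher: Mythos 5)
Your proposal is correct and follows essentially the same route as the paper: the conditions \eqref{e:lin:nec:cinv}--\eqref{e:lin:nec:ker} and \eqref{e:lin:nec:exactSR} are matched to \eqref{e:lin:const:1:b}--\eqref{e:lin:const:1:e} by the same direct image computations, and the external-stabilizability step is handled by conjugating with the same matrix $T=\begin{bmatrix}I&0\\P&I\end{bmatrix}$, identifying the quotient map as $A_2+B_2K_1$ and the off-diagonal block as $-PA_1+B_2K^{(1)}+(A_2+B_2K^{(2)})P$, exactly as in the paper's appeal to \eqref{e:lin:suf:structure}. Your version merely spells out the block computation and the recombination of $K_1,K_2$ into a single feedback more explicitly than the paper does.
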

\begin{proof}
First, we show that $R$
satisfies~\eqref{e:lin:nec:cinv}-\eqref{e:lin:nec:ker}
(\eqref{e:lin:nec:cinv}-\eqref{e:lin:nec:exactSR})
iff
\eqref{e:lin:const:1:b}-\eqref{e:lin:const:1:d}
(\eqref{e:lin:const:1:b}-\eqref{e:lin:const:1:e}) holds. By the
definition of $R$ it is straightforward to establish the equivalences
\eqref{e:lin:nec:cinv} $\iff$  \eqref{e:lin:const:1:b},
\eqref{e:lin:nec:D} $\iff$  \eqref{e:lin:const:1:c},
\eqref{e:lin:nec:ker} $\iff$  \eqref{e:lin:const:1:d} and
\eqref{e:lin:nec:exactSR} $\iff$
\eqref{e:lin:const:1:e}. 
Now we assume that $R$ is
$(A_{12},B_{12})$-controlled invariant. Let $K_{12}=[K'_1\, K_1]$ so that
$(A_{12}+B_{12}K_{12})R\subseteq R$. Then we pick $T$
in~\eqref{e:lin:suf:structure} by
\begin{IEEEeqnarray*}{c}
T=\begin{bmatrix}
    I_{n_1}& 0\\
    P&I_{n_2}
  \end{bmatrix}, \text{where } 
  \im\begin{bmatrix}
    I_{n_1}\\
    P
  \end{bmatrix}=R
\end{IEEEeqnarray*}
and observe that
$-PA_1+B_2K'_1+(A_2+B_2K_1)P=0$ and $F_{22}=A_2+B_2K_1$, which shows
that \eqref{e:lin:const:1:b} holds and, consequently,
\eqref{e:lin:nec:estab} holds iff
\eqref{e:lin:const:1:a} holds.
\end{proof}
The following corollary readily follows from the proofs of
Theorem~\ref{t:lin:suf} and~\ref{t:lin:construction}.
\begin{corollary}\label{c:lin:construction}
Suppose that \eqref{e:lin:const:1:a}-\eqref{e:lin:const:1:d} hold. Let
$M\in\R^{n_2\times n_2}$ be a symmetric positive definite matrix that
satisfies
\begin{IEEEeqnarray*}{l}
C_2^\top C_2\le M\\
(A_2+B_2K_1)^\top M+M(A_2+B_2K_1)\le -2\lambda M
\end{IEEEeqnarray*}
for some $\lambda\in\R_{>0}$. Then a simulation function from
  $\Sigma_1$ to $\Sigma_2$ is given by
\begin{IEEEeqnarray*}{c}
 V(x_1, x_2)=\big((x_2-Px_1)^\top M(x_2-Px_1)\big)^{\tfrac{1}{2}}
\end{IEEEeqnarray*}
and the interface function that maps $x_1$, $x_2$, $u_1$, $w_1$ to
$u_2$ so that~\eqref{inequality1} holds is given by
\begin{IEEEeqnarray*}{c}
u_2=K_1(x_2-Px_1)-K_2x_1-K_3w_1+K_4u_1,
\end{IEEEeqnarray*}
where $K_4$ is given implicitly as the matrix that minimizes
$|\sqrt{M}(PB_1-B_2K_4)|$.  Let $d_i\in\R^n$, $i\in\intcc{1;p}$
denote the columns of $D_2$.
The comparison functions associated with $V$ follow for all $r\in\R_{\ge0}$  and
 $s\in\R_{\ge0}^p$ by $\alpha(r)=r$,
$\lambda(r)=\lambda r$, 
\begin{IEEEeqnarray*}{l}
\rho(r)=|\sqrt{M} (PB_1-B_2K_4)|r,\\
\mu(s_1,\ldots,s_p)=|\sqrt{M}d_{1}|s_1+\ldots+|\sqrt{M}d_{p}|s_p.
\end{IEEEeqnarray*}
\end{corollary}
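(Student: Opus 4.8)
The plan is to specialize the abstract construction carried out in the proofs of Theorem~\ref{t:lin:suf} and Theorem~\ref{t:lin:construction} to the concrete relation $R=\{(x_1;x_2)\mid Px_1=x_2\}$, and to read off the explicit data ($V$, the interface, and the comparison functions) from the objects $\Pi$, $\bar A$, $\bar M$, $\bar C$ constructed there. By Theorem~\ref{t:lin:construction}, the hypotheses~\eqref{e:lin:const:1:a}--\eqref{e:lin:const:1:d} guarantee that $R$ satisfies~\eqref{e:lin:nec:estab}--\eqref{e:lin:nec:ker}, i.e.\ $R$ induces a simulation function, so Theorem~\ref{t:lin:suf} applies and already produces a simulation function of square-root-of-quadratic form; the only work left is to make every ingredient explicit for this particular $R$.

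First I would track the coordinate change used in the proof of Theorem~\ref{t:lin:construction}: with $T=\bigl[\begin{smallmatrix} I_{n_1}&0\\ P&I_{n_2}\end{smallmatrix}\bigr]$ one has $T^{-1}=\bigl[\begin{smallmatrix} I_{n_1}&0\\ -P&I_{n_2}\end{smallmatrix}\bigr]$, hence $\Pi=\bar T_2=[-P\;\; I_{n_2}]$ and, as computed there, $\bar A=F_{22}=A_2+B_2K_1$. Substituting $\Pi(x_1;x_2)=x_2-Px_1$ into the expression $V(x_1,x_2)=((x_1;x_2)^\top\Pi^\top\bar M\Pi(x_1;x_2))^{1/2}$ from the proof of Theorem~\ref{t:lin:suf} gives $V(x_1,x_2)=((x_2-Px_1)^\top\bar M(x_2-Px_1))^{1/2}$; renaming $\bar M$ as $M$ reproduces the stated $V$. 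The output matrix $\bar C$ defined by $\bar C\Pi=C_{12}$ reduces to $\bar C=C_2$, since $\bar C[-P\;\; I_{n_2}]=[-C_1\;\; C_2]$ forces $\bar C=C_2$ and the consistency relation $\bar C(-P)=-C_1$ is exactly~\eqref{e:lin:const:1:d}. Consequently the inequalities~\eqref{e:t:lin:suf:1} become precisely $C_2^\top C_2\le M$ and $(A_2+B_2K_1)^\top M+M(A_2+B_2K_1)\le-2\lambda M$, as claimed, and $M$ inherits symmetric positive definiteness from $\bar M$.

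Next I would specialize the input $u_2=K_{12}(x_1;x_2)+K_4u_1+u_3$ of the proof of Theorem~\ref{t:lin:suf}. Writing $K_{12}=[K_1'\;\; K_1]$, the identity $-PA_1+B_2K_1'+(A_2+B_2K_1)P=0$ from the proof of Theorem~\ref{t:lin:construction}, combined with~\eqref{e:lin:const:1:b}, forces $B_2K_1'=B_2(-K_2-K_1P)$, so (as only $B_2K_1'$ enters $u_2$) we may take $K_{12}(x_1;x_2)=K_1(x_2-Px_1)-K_2x_1$. By~\eqref{e:lin:const:1:c}, the choice $u_3=-K_3w_1$ satisfies $D_{12}w_1+B_{12}u_3\in R$. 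This yields the stated interface $u_2=K_1(x_2-Px_1)-K_2x_1-K_3w_1+K_4u_1$. For the external-input gain, $\Pi\bigl[\begin{smallmatrix}B_1\\ B_2K_4\end{smallmatrix}\bigr]=B_2K_4-PB_1$ gives $\rho(r)=|\sqrt{M}(PB_1-B_2K_4)|\,r$, and choosing $K_4$ to minimize $|\sqrt{M}(PB_1-B_2K_4)|$ minimizes this gain, which is exactly the implicit characterization of $K_4$ in the statement.

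The one point where I expect to deviate from the proof of Theorem~\ref{t:lin:suf}, and which is the main (if minor) obstacle, concerns the internal-input term: that proof applies Cauchy--Schwarz to $\bigl[\begin{smallmatrix}0\\ D_2\end{smallmatrix}\bigr]\Delta w$ in one stroke, producing a single coefficient $|\sqrt{M}\Pi\bigl[\begin{smallmatrix}0\\ D_2\end{smallmatrix}\bigr]|=|\sqrt{M}D_2|$, whereas the corollary asks for the finer MAF $\mu(s_1,\dots,s_p)=\sum_{i=1}^p|\sqrt{M}d_i|s_i$ indexed by the columns $d_i$ of $D_2$. To recover this I would instead decompose $\bigl[\begin{smallmatrix}0\\ D_2\end{smallmatrix}\bigr]\Delta w=\sum_{i=1}^p\bigl[\begin{smallmatrix}0\\ d_i\end{smallmatrix}\bigr]\Delta w_i$, observe $\Pi\bigl[\begin{smallmatrix}0\\ d_i\end{smallmatrix}\bigr]=d_i$, and apply Cauchy--Schwarz to each summand separately, so that each scalar difference $|\Delta w_i|$ is weighted by $|\sqrt{M}d_i|$; this is admissible since such a sum is again an element of $\mathrm{MAF}_p$. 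With $\alpha=\id$ and $\lambda(r)=\lambda r$ carried over unchanged, all the asserted comparison functions then follow, completing the argument.
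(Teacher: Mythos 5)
Your proposal is correct and follows essentially the same route as the paper, which simply asserts that the corollary ``readily follows'' from the proofs of Theorems~\ref{t:lin:suf} and~\ref{t:lin:construction}; you have carried out exactly that specialization ($\Pi=[-P\;\;I_{n_2}]$, $\bar A=A_2+B_2K_1$, $\bar C=C_2$, the interface via $B_2K_1'=-B_2(K_2+K_1P)$ and $u_3=-K_3w_1$). Your columnwise Cauchy--Schwarz treatment of $D_2\Delta w$ is the right way to recover the finer gain $\mu(s)=\sum_i|\sqrt{M}d_i|s_i$ stated in the corollary, which the coarser bound in the proof of Theorem~\ref{t:lin:suf} does not directly give.
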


\subsection{Construction of Approximate Abstractions}
\label{ss:con:lin}

In this subsection, we are interested in the construction of an
approximate
abstraction $\hat \Sigma=(\hat A,\hat B,\hat C,\hat D)$ for a given linear control system $\Sigma=(A,B,C,D)$ together with a
square-root-of-quadratic simulation function from $\hat \Sigma$
to~$\Sigma$. Given the fact that any two asymptotically stable
  linear systems $\Sigma$
  and $\hat \Sigma$ (with suitable internal input and output space dimensions) can be related via a simulation function, we follow the
  approach in~\cite{GP09} to construct abstractions of linear control systems,
  and ask not only for a simulation function from
  $\hat\Sigma$ to $\Sigma$, but additionally require that there exists a simulation relation\footnote{Actually, the authors of~\cite{GP09}
show that $\hat\Sigma$ is $\hat P$-related to $\Sigma$
(see~\cite[Def.~3]{GP09}), which, when we omit the internal inputs, is
equivalent to $\hat R$ being a simulation relation from $\Sigma$ to
$\hat \Sigma$.} from $\Sigma$ to $\hat \Sigma$, which ensures that nice
properties like controllability of $\Sigma$ are preserved on the
abstraction $\hat \Sigma$.
The construction is based on the assumption that
\begin{IEEEeqnarray}{l}
\IEEEyesnumber
\label{e:lin:const}
\IEEEyessubnumber
\label{e:lin:const:a}
(A,B) \text{ is stabilizable},
\end{IEEEeqnarray}
and on the existence of a matrix $P \in \R^{n\times
\hat n}$ with a trivial kernel that satisfies
\begin{IEEEeqnarray}{l}
\IEEEyessubnumber
\label{e:lin:const:b}
A\im P\subseteq \im P+\im B\\
\IEEEyessubnumber
\label{e:lin:const:c}
\im D\subseteq \im P+\im B\\
\IEEEyessubnumber
\label{e:lin:const:d}
\im P+\ker C=\R^n.
\end{IEEEeqnarray}
In~\cite{GP09} conditions~\eqref{e:lin:const:a}, \eqref{e:lin:const:b}
and~\eqref{e:lin:const:d} were used to construct an abstraction $\hat
\Sigma$ and a square-root-of-quadratic simulation function $V$
from $\hat \Sigma$ to $\Sigma$ together with a simulation relation
$\hat R=\{(x;\hat x)\mid \hat Px=\hat x\}$ (for some $\hat
P\in\R^{\hat n\times n}$)
from $\Sigma$ to $\hat \Sigma$. 
In this paper, we extend the scheme
in~\cite{GP09} in the following directions. First, we add
condition~\eqref{e:lin:const:c} in order to be able to account for
systems with internal and external inputs. Second, we show that the
simulation relation $\hat R$ actually induces a simulation function
from $\Sigma$ to $\hat \Sigma$. Third, and most importantly, using the
novel geometric characterization of simulation functions,
we show that the conditions
\eqref{e:lin:const:a}-\eqref{e:lin:const:d} are not only \emph{sufficient}
but actually \emph{necessary} for the existence of an abstraction $\hat
\Sigma$ so that the relation $R=\{(\hat x,x)\mid P\hat x=x\}$ induces
a simulation function from $\hat \Sigma$ to $\Sigma$ and $\hat R$
induces a simulation function from $\Sigma$ to~$\hat \Sigma$.

\begin{theorem}\label{t:lin:Pconst}
Consider $\Sigma=(A,B,C,D)$ and
\begin{IEEEeqnarray*}{c}
R=\{(\hat x;x)\in \R^{\hat n}\times \R^n\mid P\hat x=x\}
\end{IEEEeqnarray*}
with $P\in\R^{n\times \hat n}$, $\ker P=0$.
There exist $\hat \Sigma=(\hat A,\hat B,\hat C,\hat
D)$ with the same internal input space dim.~and
the same output space dim.~as $\Sigma$ 
and 
\begin{IEEEeqnarraybox}{c}
\hat R=\{(x;\hat x)\in \R^{n}\times \R^{\hat n}\mid \hat P x=\hat x\}
\end{IEEEeqnarraybox}
with $\hat P\in\R^{\hat n\times n}$, 
 so that $R$ induces a simulation function from $\hat \Sigma$ to $\Sigma$
and $\hat R$ 
induces a simulation function from $\Sigma$ to $\hat\Sigma$ 
iff~\eqref{e:lin:const:a}-\eqref{e:lin:const:d} hold.
\end{theorem}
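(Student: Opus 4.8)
The plan is to derive the equivalence from Theorem~\ref{t:lin:construction} by applying it twice, once for each relation. Reading $R=\{(\hat x;x)\mid P\hat x=x\}$ as a relation from $\hat\Sigma$ to $\Sigma$ (so $\hat\Sigma$ plays the role of $\Sigma_1$, $\Sigma$ that of $\Sigma_2$, and $P$ that of the matrix in~\eqref{e:lin:relP}), Theorem~\ref{t:lin:construction} turns ``$R$ induces a simulation function from $\hat\Sigma$ to $\Sigma$'' into the solvability, for some $K_1,K_2,K_3$, of
\[ A+BK_1\ \text{Hurwitz},\quad AP=P\hat A+BK_2,\quad D=P\hat D+BK_3,\quad \hat C=CP. \]
Reading $\hat R=\{(x;\hat x)\mid \hat Px=\hat x\}$ as a relation from $\Sigma$ to $\hat\Sigma$, the same theorem turns ``$\hat R$ induces a simulation function from $\Sigma$ to $\hat\Sigma$'' into the solvability, for some $\hat K_1,\hat K_2,\hat K_3$, of
\[ \hat A+\hat B\hat K_1\ \text{Hurwitz},\quad \hat A\hat P=\hat PA+\hat B\hat K_2,\quad \hat D=\hat PD+\hat B\hat K_3,\quad C=\hat C\hat P. \]
All subsequent work reduces to these two algebraic systems.

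\emph{Necessity.} Assuming such $\hat\Sigma$ and $\hat P$ exist, I read off the four conditions. The Hurwitz relation $A+BK_1$ gives \eqref{e:lin:const:a}; $AP=P\hat A+BK_2$ gives $A\im P\subseteq\im P+\im B$, i.e.~\eqref{e:lin:const:b}; $D=P\hat D+BK_3$ gives \eqref{e:lin:const:c}. For \eqref{e:lin:const:d} I combine $\hat C=CP$ with $C=\hat C\hat P$ to obtain $C=CP\hat P$, whence $\im(I-P\hat P)\subseteq\ker C$; the decomposition $x=P\hat Px+(I-P\hat P)x$ then shows $\R^n=\im P+\ker C$. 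Note that \eqref{e:lin:const:d} is the only condition that genuinely invokes the \emph{second} relation $\hat R$.

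\emph{Sufficiency.} Here I construct $\hat\Sigma$ and $\hat P$ from~\eqref{e:lin:const:a}--\eqref{e:lin:const:d}. Using~\eqref{e:lin:const:d} and $\ker P=0$, I first choose a subspace $\mathcal{C}\subseteq\ker C$ complementary to $\im P$, so that $\R^n=\im P\oplus\mathcal{C}$ (take any complement of $\im P\cap\ker C$ inside $\ker C$). Collecting a basis of $\mathcal{C}$ in a matrix $Q$, the matrix $T=[P\;\,Q]$ is invertible and I let $\hat P$ be the top block of $T^{-1}$, so that $\hat PP=I$, $\hat PQ=0$, $\hat P$ has full row rank, and $P\hat P$ is the projection onto $\im P$ along $\mathcal{C}$. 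I set $\hat C:=CP$; since $\im(I-P\hat P)=\mathcal{C}\subseteq\ker C$, this yields both $\hat C=CP$ and $C=\hat C\hat P$. Solvability of $AP=P\hat A+BK_2$ and $D=P\hat D+BK_3$ (which \emph{defines} $\hat A,\hat D$ together with $K_2,K_3$) is exactly~\eqref{e:lin:const:b} and~\eqref{e:lin:const:c}, and $K_1$ exists by~\eqref{e:lin:const:a}; this settles the first algebraic system.

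The crux is the second system, specifically the stabilizability of $(\hat A,\hat B)$, and I expect this to be the main obstacle: neither $\hat A\hat P=\hat PA+\hat B\hat K_2$ nor $\hat D=\hat PD+\hat B\hat K_3$ alone pins down $\hat B$, and a naive choice need not produce a stabilizable pair. I resolve this by deliberately over-sizing $\hat B:=\hat P[\,B\;\;AQ\,]$, which is admissible since the external-input dimension of $\hat\Sigma$ is unconstrained. Using $\hat A=\hat PAP-\hat PBK_2$ (from $\hat PP=I$) one computes $\hat A\hat P-\hat PA=-\hat PA(I-P\hat P)-\hat PBK_2\hat P$ and $\hat D-\hat PD=-\hat PBK_3$; since $\im(I-P\hat P)=\im Q$, every column on the right lies in $\im\hat PB+\im\hat PAQ=\im\hat B$, so both intertwining equations are solvable. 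Finally, stabilizability of $(\hat A,\hat B)$ follows by a PBH argument: if $v^*\hat A=\mu v^*$ and $v^*\hat B=0$ with $\mathrm{Re}\,\mu\ge0$, then left-multiplying $\hat A\hat P=\hat PA+\hat B\hat K_2$ by $v^*$ gives $(v^*\hat P)A=\mu(v^*\hat P)$, while $\im\hat PB\subseteq\im\hat B$ forces $(v^*\hat P)B=0$; stabilizability of $(A,B)$ then yields $v^*\hat P=0$, and full row rank of $\hat P$ gives $v=0$, a contradiction. Hence $\hat A+\hat B\hat K_1$ is Hurwitz for a suitable $\hat K_1$, the second system holds, and the proof is complete.
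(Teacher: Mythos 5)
Your proof is correct and follows essentially the same route as the paper: both reduce the statement to a double application of Theorem~\ref{t:lin:construction}, set $\hat C=CP$, define $\hat A$, $\hat D$ via the intertwining relations $AP=P\hat A+BK_2$ and $D=P\hat D+BK_3$, and deliberately over-size $\hat B$ so that its image contains $\im \hat PB+\im\hat PAQ$ with $\im Q$ a complement of $\im P$ inside $\ker C$. The only differences are cosmetic: you replace the paper's appeals to Lemma~3 of~\cite{GP09} (for constructing $\hat P$ and for deriving~\eqref{e:lin:const:d}) and to the simulation-relation argument for stabilizability of $(\hat A,\hat B)$ with self-contained basis-completion and PBH arguments, all of which check out.
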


\begin{proof}
Let $R$ ($\hat R$) induces a simulation function from $\hat \Sigma$ to
$\Sigma$ ($\Sigma$ to $\hat \Sigma$). From
Theorem~\ref{t:lin:construction} it follows that~\eqref{e:lin:const:1:a}
implies~\eqref{e:lin:const:a}, \eqref{e:lin:const:1:b}
implies~\eqref{e:lin:const:b} and \eqref{e:lin:const:1:c}
implies~\eqref{e:lin:const:c}. From~\eqref{e:lin:const:1:d} it follows
that $\hat C =C P$ and $\hat C\hat P=C$, which implies that $C P\hat
P=C$. Since $\ker P=0$, Lemma~3 in~\cite{GP09} is applicable and we
obtain~\eqref{e:lin:const:d}.
Now suppose that~\eqref{e:lin:const:a}-\eqref{e:lin:const:d} hold.
Let $\hat C=CP$ and pick $\hat A$ and $\hat C$ together with $K_1,K_2,K_3$ 
so that \eqref{e:lin:const:1:a}-\eqref{e:lin:const:1:d} hold for $A$,
$B$, $C$, $D$, $\hat A$, $\hat B$, $\hat C$, $\hat D$ in place of $A_2$, 
$B_2$, $C_2$, $D_2$, $A_1$, $B_1$, $C_1$, $D_1$, respectively.
Theorem~\ref{t:lin:construction} shows that $R$ induces a simulation
function from $\hat \Sigma=(\hat A,\hat
B,\hat C,\hat D)$ to $\Sigma$ for any $\hat B$ of appropriate
dimension. We continue to show that $\hat R$ induces a simulation
function from $\Sigma$ to $\hat \Sigma$. Again we use
Lemma~3 in ~\cite{GP09} to pick $\hat P$ with $\im \hat P=\R^{\hat
n}$ so that $\hat C\hat P=C$, $\hat
PP=I_{\hat n}$ and $P\hat P+EF=I_n$ for some matrices $E$ and $F$ of
appropriate dimension with
$\im E=\ker C$. Let $\hat B=[\hat PB\; \hat PAE]$. We derive $\hat
A\hat P=\hat P P \hat A \hat P=\hat PA P\hat P- \hat PB(K_2+K_1P)\hat P=
\hat P A-\hat PAEF -\hat P B(K_2+K_1P)\hat P=\hat P A+\hat
 B[-(K_2+K_1P)^\top\;-F^\top]^\top$ and $\hat D=\hat P P \hat D
 =\hat P(D-BK_3)=\hat P D+\hat B[-K_3^\top\; 0]^\top$. Additionally, we
 have $\hat P B=\hat B[I_{m}\;0]^\top$ and it follows that
 $\hat R$ satisfies~\eqref{e:lin:const:1:b}-\eqref{e:lin:const:1:e} for
 $\hat P$, $\hat A$, $\hat B$, $\hat C$, $\hat D$, $A$,
$B$, $C$, $D$ in place of $P$, $A_2$, $B_2$, $C_2$, $D_2$, $A_1$,
$B_1$, $C_1$, $D_1$, respectively, which shows that $\hat{R}$ is a simulation
relation from $\Sigma$ to $\hat\Sigma$ \cite[Prp. 5.2]{vdS04}. Moreover, $\im \hat P=\R^{\hat
n}$. As $(A,B)$ is stabilizable we use~\eqref{e:SR} to verify that
$(\hat A,\hat B)$ is stabilizable as well. Hence, there exists a
matrix $\hat K_1$ so that~\eqref{e:lin:const:1:a} holds. It follows
that $\hat R$ induces a simulation function from $\Sigma$ to $\hat
\Sigma$.
\end{proof}

We summarize the construction of an approximate abstraction of a stabilizable
control system $\Sigma=(A,B,C,D)$ in Table~\ref{tb:1}.
\begin{table}[h]
\begin{mdframed}
\begin{enumerate}
  \item Compute $M$ and $K_1$ so that $ C^\top C\le M$ and \\
    \begin{IEEEeqnarraybox}{l}
      (A+BK_1)^\top M+M(A+BK_1)\le -2\lambda M
    \end{IEEEeqnarraybox}
    holds.
  \item Determine $P$ with $\ker P=0$  that
  satisfies~\eqref{e:lin:const:b}-\eqref{e:lin:const:d}\\ and $\hat P$
  so that $CP\hat P=C$ and $\im \hat P=\R^{\hat n}$.
  \item Determine $\hat A$ and $K_2$ so that $AP=P\hat A+BK_2$ holds.
  \item Determine $\hat D$ and $K_3$ so that $D=P\hat D+BK_3$ holds.
  \item The matrices $\hat B$ and $\hat C$ follow by  $\hat B=[\hat
  PB\; \hat PAE]$\\ where $\im
  E=\ker C$ and $\hat C=CP$.
\end{enumerate}
\end{mdframed}
\caption{Construction of an approximate abstraction $\hat \Sigma$.}\label{tb:1}
\vspace{-0.3cm}
\end{table}
The associated simulation function from $\hat \Sigma$ to $\Sigma$ follows from
Corollary~\ref{c:lin:construction} to
  \mbox{$V(\hat x, x) =\sqrt{(x-P\hat x)^\top M(x-P\hat x)}$}
and the interface function that maps $\hat x$, $x$, $\hat u$, $\hat w$ to
$u$ so that~\eqref{inequality1} holds is given by
\begin{IEEEeqnarraybox}{c}
u=K_1(x-P\hat x)-K_2\hat x-K_3\hat w+K_4\hat u.
\end{IEEEeqnarraybox}
The matrix $K_4$ is given as the one that minimizes
$|\sqrt{M}(P\hat B-BK_4)|$ which can be computed according to~\cite[Prp.~1]{GP09}.

Note that Theorem~\ref{t:lin:Pconst}
  provides only \emph{structural} conditions for the construction of
approximate abstractions of linear control systems and it is an interesting
open question on how to pick the different matrices outlined in Table~\ref{tb:1}
(within the allowed domains) so as to obtain approximate abstractions with
\emph{optimal} approximation accuracies.

\section{An Example}\label{example}

Let us consider the compositional construction of an approximate abstraction together with a
simulation function for an interconnected linear control system
illustrated in Figure~\ref{f:ex1}. We consider two triple integrators
($\Sigma_1$ and $\Sigma_3$) which are organized in a feedback connection, where 
the output of $\Sigma_3$ is directly connected to the input of $\Sigma_1$ and the
output of $\Sigma_1$ is connected to the input of $\Sigma_3$ via two
two-dimensional systems $\Sigma_2$ and $\Sigma_4$.
\begin{figure}[h]
\centering

  \begin{tikzpicture}[auto, node distance=2cm, >=latex']

  \tikzstyle{block}= [draw,
                      thick,
                      rectangle,
                      minimum height = 7mm, 
                      minimum width = 7mm]

    \node[block] (sys1) at (-.5,0) {$\Sigma_1$};
    \node[block] (sys2) at (2,-.5) {$\Sigma_2$};
    \node[block] (sys3) at (4.5,0) {$\Sigma_3$};
    \node[block] (sys4) at (2,.5) {$\Sigma_4$};

    \node (i1) at (1,-1) {};
    \node (i2) at (3,-1) {};

    \draw[thick,<-] ($(sys1.west)+(0,0.15)$) -- node[near end,above] {\footnotesize $u_{1}$} ++(-1.5,0);

    \draw[thick,->] ($(sys1.east)+(0,0)$) -- node[near end,above] {\footnotesize $y_{11}$} ++(1.5,0);
    \draw[thick,->] ($(sys3.east)+(0,0.15)$) -- node[near end] {\footnotesize $y_{33}$} ++(1.5,0);
    \draw[thick,->] ($(sys4.east)+(0,.15)$)  -- node[near end] {\footnotesize $y_{44}$} ++(1.5,0);
    \draw[thick,->] ($(sys2.east)-(0,.15)$)  -- node[near end,below] {\footnotesize $y_{22}$} ++(1.5,0);

    \draw[thick,<-] ($(sys3.west)+(0,0)$) -- node[very near end,above] {\footnotesize $u_{3}$} ++(-1.5,0);

    \draw[->] ($(sys1.east)+(0,.15)$) -|  (.5,.15) |- node[near end, above] {\footnotesize $y_{14}$} (sys4.west);
    \draw[->] ($(sys1.east)+(0,-.15)$) -| (.5,-.15) |- node[near end, below] {\footnotesize $y_{12}$} (sys2.west);

    \draw[->] ($(sys4.east)-(0,.15)$) -|  (3.5,.15) |- node[above,near end] {\footnotesize $y_{43}$} ($(sys3.west)+(0,.15)$);
    \draw[->] ($(sys2.east)+(0,.15)$) -|  (3.5,-.15) |- node[below,near end] {\footnotesize $y_{23}$} ($(sys3.west)-(0,.15)$);

    \draw[->] 
    ($(sys3.east)-(0,0.15)$) -| node[near start,below] {\footnotesize $y_{31}$}
    ++(0.5,-1) -- 
    ++(-7,0) |-
    ($(sys1.west)-(0,0.15)$);

  \end{tikzpicture}
  \caption{The interconnected system
  $\mathcal{I}(\Sigma_1,\Sigma_2,\Sigma_3,\Sigma_4)$.}\label{f:ex1}
  \vspace{-0.2cm}
\end{figure}
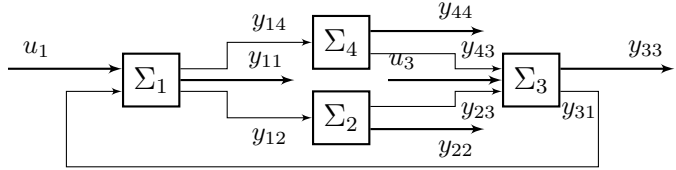
The system matrices are accordingly set to
\begin{IEEEeqnarray*}{c'c}
A_1=A_3
=
\begin{bmatrix}
 0 & 1 & 0\\
 0 & 0 & 1\\
 0 & 0  & 0
\end{bmatrix}
,&
\begin{IEEEeqnarraybox}[][c]{l}
B^\top_1=B^\top_3=
\begin{bmatrix} 0& 0& 1 \end{bmatrix},\\
C_{11}= C_{33}
=
\begin{bmatrix} 1& 0& 0 \end{bmatrix},
\end{IEEEeqnarraybox}
\end{IEEEeqnarray*}
and
\begin{IEEEeqnarray*}{l}
A_2=A_4
=
\begin{bmatrix}
 0 & 1 \\
 -6 & -5
\end{bmatrix}
,\;
B_2^\top=
B_4^\top=
C_{22}=C_{44}=
\begin{bmatrix} 1& 0 \end{bmatrix}.
\end{IEEEeqnarray*}
Whereas the interconnection matrices $C_{ij}$, $D_{ij}$ are given by
\begin{align*}
C_{14}&=C_{12}=C_{31}=
\begin{bmatrix} 1& 0& 0 \end{bmatrix},
C_{23}=C_{43}=
\begin{bmatrix} 1& 0 \end{bmatrix}\\
D_{13}&=
\begin{bmatrix}
 0\\
 0\\
 d_{1}
\end{bmatrix}
,\;
D_{21}=D_{41}=
\begin{bmatrix}
 -d_2\\
 \phantom{-}2d_{2}
\end{bmatrix}
,\;
D_{34}=D_{32}=
\begin{bmatrix}
 0\\
 0\\
 d_3
\end{bmatrix},
\end{align*}
for some $d_i\in\R$. The remaining $C_{ij}$ and $D_{ij}$ are given by
  zero matrices. We summarize the internal input and
output matrices by
\begin{IEEEeqnarray*}{c'c}
C_{1}=C_3=
\begin{bmatrix} 1& 0& 0 \end{bmatrix},&
C_{2}=C_{4}=
\begin{bmatrix} 1& 0 \end{bmatrix},\\
D_{1}=
\begin{bmatrix}
 0\\
 0\\
 d_{1}
\end{bmatrix}
,\;
D_{2}=D_{4}=
\begin{bmatrix}
 -d_2\\
 \phantom{-}2d_{2}
\end{bmatrix}
,&
D_{3}=
\begin{bmatrix}
 0 & 0\\
 0 & 0\\
 d_{3} &d_{3}
\end{bmatrix}.
\end{IEEEeqnarray*}

{\bf The Abstract System.}
We continue the example by applying the procedure outlined in
Table~\ref{tb:1}
to construct an abstraction $\hat \Sigma_i$ of each subsystem~$\Sigma_i$.

We start by computing $M_i$, $K_{i,1}$ and $\lambda_i$, for
$i\in\{1,3\}$,
such that the matrix inequalities in 1) of Table~\ref{tb:1}
hold. To this end, we solve the linear matrix inequality given by
equations (6) and (7) in~\cite{GP09}.
We obtain
\begin{IEEEeqnarray*}{c}
M_i=
\begin{bmatrix}
 4.59 &    4.07 &    0.90\\
 4.07 &    4.72 &    1.24\\
 0.90 &    1.24 &    0.61
\end{bmatrix}
,\;
K_{i,1}=
-\begin{bmatrix}
5.13 &7.12 & 3.03
\end{bmatrix},
\end{IEEEeqnarray*}
with $\lambda_i=1$.
Next we determine $P_i$ for $\Sigma_i$ so
that~\eqref{e:lin:const:b}-\eqref{e:lin:const:d} hold by
$P_i= [ 1\; 0 \;0 ]^\top$. Following 2) through 5) in Table~\ref{tb:1} we obtain $\hat \Sigma_i$ by
\begin{IEEEeqnarray*}{c}
\hat A_i=0,\;\hat B_i=1,\;\hat D_i=0,\;\hat C_i=1,
\end{IEEEeqnarray*}
together with the matrices for the interface $K_{i,2}=0$,
$K_{1,3}=d_1$, $K_{3,3}=[d_3\; d_3]$ and $K_{i,4}=1.47$.
The simulation functions follow by
$V_i(\hat x_i,x_i)=\sqrt{(x_i-P_i\hat x_i)^\top M_i(x_i-P_i\hat x_i)}$
and the associated comparison functions by $\alpha_1=\alpha_3=\id$ and 
\begin{IEEEeqnarray*}{c,c,l}
\lambda_1(r)=r,&\rho_{1}(r)=1.81r,&
\mu_{1}(r_2,r_3,r_4)=0.78d_{1}r_3\\
 \lambda_3(r)=r,&\rho_{3}(r)=1.81r,&
\mu_{3}(r_1,r_2,r_4)=0.78d_{3}(r_2+r_4).
\end{IEEEeqnarray*}

We continue with subsystems $\Sigma_2$ and $\Sigma_4$.
Since the subsystems $\Sigma_2$ and
$\Sigma_4$ have no external inputs, it is necessary that the
matrices $A_2$ and $A_4$ are Hurwitz in order to be able to find
matrices $M_2$ and $M_4$ that satisfy the matrix inequalities in 1) of
Table~\ref{tb:1}. This holds for our example and we 
compute 
\begin{IEEEeqnarray*}{c'c}
M_2=M_4
=
\begin{bmatrix}
    26 & 10\\
    10 & 4
\end{bmatrix},&K_{2,1}=K_{4,1}=0
\end{IEEEeqnarray*}
with $\lambda_2=\lambda_4=2$.
Also the conditions~\eqref{e:lin:const:b} and~\eqref{e:lin:const:c}
simplify in the absence of
any external inputs. It follows that $\im P_i$
needs to be an $A_i$-invariant subspace that contains
$\im D_i$. In this case, we can use the
Algorithm~3.2.1 in~\cite{BM92}
to compute the minimal $A_i$-invariant subspace that contains $\im
D_i$. We obtain $P_i=[1\;-2]^\top$, $i\in\{2,4\}$, and the
abstractions $\hat\Sigma_i$ by 
\begin{IEEEeqnarray*}{c}
  \hat A_i=-2$, $\hat B_i=1$, $\hat D_i=-d_2$, $\hat C_i=1.
\end{IEEEeqnarray*}
As before we obtain the square-root-of-quadratic simulation function,
defined by $P_i$ and $M_i$.
The associated interface follows by $k_i\equiv 0$.
The comparison functions associated with the simulation function $V_i$
are given by $\alpha_i=\id$, $\lambda_i(r)=2r$, $\rho_{i}(r)=1.41r$ and
$\mu_{i}(r_1,r_2,r_3)=1.41d_{2}r_1$.

{\bf The Composition.}
We apply Theorem~\ref{t:ic} to obtain 
a simulation function from $\mathcal{I}(\hat \Sigma_1,\hat \Sigma_2,\hat
\Sigma_3,\hat \Sigma_4)$ to
$\mathcal{I}(\Sigma_1,\Sigma_2,\Sigma_3,\Sigma_4)$. The functions 
$\Lambda$ and $\Gamma$ are linear and identified with 
\begin{IEEEeqnarray*}{c,c}
\Lambda
=
\begin{bmatrix}
1 & 0 & 0 & 0\\
0& 2  & 0 & 0\\
0 & 0 &1 & 0\\
0 & 0 & 0 & 2
\end{bmatrix},&
\Gamma
=
\begin{bmatrix}
0 & 0 & 0.78d_1 & 0\\
1.41d_2 & 0 & 0 & 0\\
0 & 0.78d_3 & 0 & 0.78d_3\\
1.41d_2 & 0 & 0 & 0\\
\end{bmatrix}.
\end{IEEEeqnarray*}
In order to be able to apply Theorem~\ref{t:ic}, we need to assure that
the spectral radius of
$\Gamma\Lambda^{-1}$ is strictly less than one so that 
there exists a vector $\eta\in\R^4_{>0}$ such that
$(1+\varepsilon)\Gamma\Lambda^{-1}\eta<\eta$ holds for some $\varepsilon>0$. We pick
$d_{1}=d_{2}=d_3=0.5$ and obtain $\lambda_{\max}(\Gamma\Lambda^{-1})=0.19$. We pick
$\eta=\begin{bmatrix} 0.4 & 0.6 &  0.5  &  0.6\end{bmatrix}^\top$
and verify that $(1+\varepsilon)\Gamma\Lambda^{-1}\eta<\eta$ holds for
$\varepsilon=4$. Certainly,
$\lambda_i/\eta_i r$ is differentiable and satisfies~\eqref{e:sigmalambda}.
We apply Theorem~\ref{t:ic} and obtain
$V(\hat x, x)=\max_i\tfrac{\lambda_i}{\eta_i}V_i(\hat x_i, x_i)$
as  simulation function from $\mathcal{I}(\hat \Sigma_1,\hat \Sigma_2,\hat
\Sigma_3,\hat \Sigma_4)$ to
$\mathcal{I}(\Sigma_1,\Sigma_2,\Sigma_3,\Sigma_4)$, with the associated comparison functions given by
$\alpha(r)=r$, $\lambda(r)=4/5r$ and $\rho(r)=4.8r$, see
Remark~\ref{e:comp:bound:lin}. Hence, we obtain the bound
\begin{IEEEeqnarray}{c}\label{e:ex:bound}
|\hat \zeta(t)-\zeta(t)|
\le 
V(\hat \xi(t),\xi(t))
\le
\mathrm{e}^{-4/5t}V(\hat x, x)+5.9||\hat \nu||_\infty.
\IEEEeqnarraynumspace
\end{IEEEeqnarray}
Let
$V_{\mathrm{vec}}(t)=(V_1(\hat\xi_1(t),\xi_1(t));\ldots;V_4(\hat\xi_4(t),\xi_4(t)))$
and 
$\hat Z=(\rho_1(||\hat \nu_1||_\infty);\ldots;\rho_4(||\hat\nu_4||_\infty))$, then similarly
to~\eqref{e:ex:bound},
we get
\begin{IEEEeqnarray}{c}\label{e:ex:bound:2}
V_{\mathrm{vec}}(t)
\le
\mathrm{e}^{-\Lambda t}V_{\mathrm{vec}}(0)
+
\Gamma\Lambda^{-1}V_{\mathrm{vec}}(t)
+
\Lambda^{-1}\hat Z
\end{IEEEeqnarray}
which provides the bound $|\hat\zeta(t)-\zeta(t)|\le
|V_{\mathrm{vec}}(t)|$.

{\bf Controller Synthesis.} Let us now synthesize a
controller for 
$\Sigma$ via the abstraction
$\hat\Sigma$ to enforce the specification, defined by the LTL
formula \cite{BK08}
\begin{IEEEeqnarray}{c'c}\label{e:ex:spec}
\G \mathrm{S}\bigwedge_{i\in\intcc{1;3}}\G \F T_i, & S,T_i\subseteq \R^4,
\end{IEEEeqnarray}
which requires that any output trajectory $\zeta$ of the closed loop system
evolves inside the set $S$  and visits each $T_i$, $i\in\intcc{1;3}$
infinitely often, i.e., for all $t\in\R_{\ge0}$ $\zeta(t)\in S$ and for each
$i\in\intcc{1;3}$ there exists $t'\ge t$ so that $\zeta(t')\in T_i$,
see~\cite{BK08}. The specification is illustrated in Figure~\ref{f:results}.
We use {\tt SCOTS}~\cite{RZ16} to  synthesize a controller for
$\hat \Sigma$ to enforce~\eqref{e:ex:spec}. In the synthesis process we
restricted the abstract inputs to $\hat u_1,\hat u_3\in\intcc{-0.1,0.1}$ and $\hat
u_2=\hat u_4=0$ for all times. Given that we can set the initial states of
$\Sigma$ to $x_i=P_i\hat x_i$, so that $V(\hat x,x)=0$, we obtain a bound
from~\eqref{e:ex:bound}
on the output difference by $|\zeta(t)-\hat \zeta(t)|\le
V(\hat\xi(t),\xi(t))\le\bar V:=0.85$ for all $t\ge0$. An improved bound is obtained
from~\eqref{e:ex:bound:2} by noting that 
$V^{k+1}_{\mathrm{vec}}=\Gamma\Lambda V^{k}_{\mathrm{vec}}+\Lambda^{-1}\hat Z$ with
$V_{\mathrm{vec}}^0=(\eta_1/\lambda_1 \bar V;\ldots;\eta_4/\lambda_4 \bar V)$
provides an upper bound $|\zeta(t)-\hat
\zeta(t)|\le V_{\mathrm{vec}}(t)\le V^{k+1}_{\mathrm{vec}}$ for any $k\ge0$.

A closed loop trajectory of $\Sigma$ and $\hat \Sigma$ as well as the output
difference and the theoretical bound $V_{\mathrm{vec}}^\infty=\lim_{k\to\infty} V^{k}_{\mathrm{vec}}$ are illustrated in Figure~\ref{f:results}.
A bound for $||\nu_1||_\infty$ follows by
$|K_{1,1}(x_1-P_1\hat x_1)|+|K_{1,3}\hat w_1|+K_{1,4}|\hat u_1|\le 5.7$ where we used
$|x_1-P_1\hat x_1|\le V_1(x_1,\hat x_1)/\sqrt{\lambda_{\min}(M_1)}\le0.47$ and
$|\hat w_1|=|\hat y_3 |\le 6$. Similarly we obtain $||\hat \nu_3||_\infty\le
4.4$.  For the example
trajectory in Figure~\ref{f:results} the inputs $\nu_1$ and $\nu_3$ never 
 exceeded $1.2$ and $0.31$, respectively.
\begin{figure}[ht]
\begin{center}
\begin{tikzpicture}[>=latex']
\node at (0,0)         {\includegraphics[width=3.9cm]{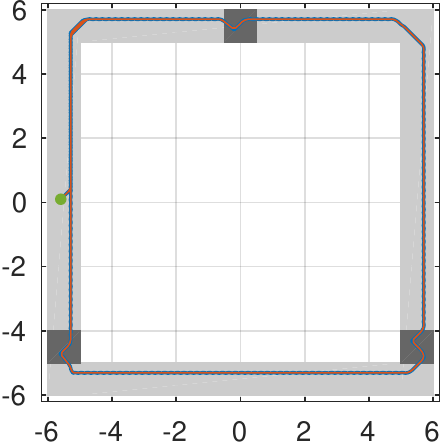}};
\node at (4.25cm,0)     {\includegraphics[width=3.9cm]{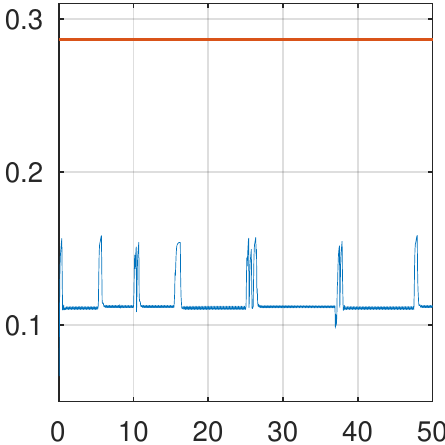}};

\node at (0.1cm,1.2cm) {\small $T_1$};
\node at (-1cm,-1cm) {\small $T_2$};
\node at (1.3cm,-1cm) {\small $T_3$};

\node at (0.25cm,-2.1cm) {\footnotesize $y_1$};
\node at (-2.2cm,0cm) {\footnotesize $y_3$};
\node at (4.5cm,-2.2cm) {\footnotesize $t[20sec]$};
\node at (5.5cm,1.4cm) {\footnotesize $|V^\infty_\mathrm{vec}|$};
\node at (5.3cm,-1.1cm) {\footnotesize $|\hat\zeta(t)-\zeta(t)|$};

\end{tikzpicture}
\end{center}
\vspace{-.5cm}
\caption{
  Left: The specification with closed loop trajectories of $\Sigma$ (red) and
$\hat \Sigma$ (blue). The green dot marks the initial state. The sets $S$ and
$T_i$ are given by
$S=\hat S\smallsetminus \check S$ with $\hat
S=\intcc{-6,6}\times\intcc{-1,1}\times\intcc{-6,6}\times\intcc{-1,1}$ and
$\check S=\intcc{-5,5}\times\intcc{-1,1}\times\intcc{-5,5}\times\intcc{-1,1}$, 
$T_1=\tfrac{1}{2}\intcc{-1,1}\times\intcc{-1,1}\times\intcc{5,6}\times\intcc{-1,1}$,
$T_2=\intcc{-6,-5}\times\intcc{-1,1}\times\intcc{-5,-4}\times\intcc{-1,1}$,
and $T_3=\intcc{5,6}\times\intcc{-1,1}\times\intcc{-5,-4}\times\intcc{-1,1}$.
Right: The output
difference (blue) and the upper bound obtained from~\eqref{e:ex:bound} (red).
} 
\label{f:results}
\end{figure}

\begin{remark}
As the controller synthesis algorithms implemented in {\tt SCOTS} operate
on a \emph{finite} abstraction of the concrete system, which is obtained by a
uniform discretization of the state space, it would not have been possible to
synthesize a controller for the original system $\Sigma$, 
without the
lower dimensional intermediate approximation $\hat\Sigma$. 
\end{remark}

\section{Summary}

In this paper we presented a compositional reasoning approach based on
a small gain type argument in connection with approximate abstractions
of nonlinear control systems. Given that the small gain type
condition is satisfied, we showed how to
construct an approximate abstraction together with a simulation
function for an interconnected nonlinear control system from 
the abstractions and simulation functions of its subsystems. Moreover,
for the special case of
linear control systems, we
characterized simulation functions in terms of a controlled invariant,
externally stabilizable subspace. Based on this characterization, we
proposed a particular scheme to construct approximate abstractions
together with the associate simulation functions. 

\bibliographystyle{plain}
\bibliography{../refs}

\appendix
\begin{lemma}\label{l:chainrule}
Let  $\alpha:\R_{\ge0}\to \R_{\ge0}$ be
a monotonically increasing function,
differentiable on $\R_{>0}$, and consider a 
function $f:\R^n\to \R_{\ge0}$. Then we have for all $x,v\in\R^n$
with $f(x)>0$
\begin{IEEEeqnarray}{c}\label{e:chainrule}
D^+(\alpha \circ f)(x,v)\le
\alpha'(f(x))D^+f(x,v).
\end{IEEEeqnarray}
\end{lemma}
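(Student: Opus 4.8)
The plan is to reduce the inequality to a one–dimensional limit and to exploit the differentiability of $\alpha$ at the single point $f(x)$. Writing $u:=f(x)>0$, the left–hand side of~\eqref{e:chainrule} is, by the definition of the upper–right Dini derivative,
\[
D^+(\alpha\circ f)(x,v)=\limsup_{t\to0^+}\frac{\alpha(f(x+tv))-\alpha(u)}{t}.
\]
Since $u>0$ lies in $\R_{>0}$, where $\alpha$ is assumed differentiable, I would expand $\alpha$ about $u$ in the first–order form $\alpha(s)=\alpha(u)+\alpha'(u)(s-u)+r(s)(s-u)$, where $r(s)\to0$ as $s\to u$ (set $r(u):=0$), and then substitute $s=f(x+tv)$ into the difference quotient above.

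Next I would split the resulting quotient into a leading part and a remainder,
\[
\frac{\alpha(f(x+tv))-\alpha(u)}{t}
=\alpha'(u)\,\frac{f(x+tv)-u}{t}
+r(f(x+tv))\,\frac{f(x+tv)-u}{t}.
\]
For the leading part, monotonicity of $\alpha$ gives $\alpha'(u)\ge0$, so its $\limsup_{t\to0^+}$ equals $\alpha'(u)\,D^+f(x,v)$ (a nonnegative factor pulls out of the $\limsup$). By subadditivity of $\limsup$ it then suffices to show that the remainder term has vanishing $\limsup$; adding the two bounds produces exactly $D^+(\alpha\circ f)(x,v)\le\alpha'(u)\,D^+f(x,v)$, which is the claim.

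The main obstacle is controlling the remainder $r(f(x+tv))\,\tfrac{f(x+tv)-u}{t}$, and here two ingredients are needed: that $r(f(x+tv))\to0$, which follows once $t\mapsto f(x+tv)$ is continuous at $t=0$ so that $f(x+tv)\to u$, and that the difference quotient $\tfrac{f(x+tv)-u}{t}$ remains bounded as $t\to0^+$. Both hold automatically when $f$ is locally Lipschitz, which is precisely the situation in the application (there $f=V_i$ is locally Lipschitz away from its zero set and we evaluate at a point with $f(x)>0$): a Lipschitz bound gives $\bigl|\tfrac{f(x+tv)-u}{t}\bigr|\le \ell\,|v|$ for small $t$, so the product tends to $0$. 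I would therefore carry out the argument under this continuity and local boundedness, observing in passing that if $D^+f(x,v)=+\infty$ and $\alpha'(u)>0$ the inequality is trivial. This interplay between the two $\limsup$'s, together with the use of the local boundedness of the difference quotient to annihilate the $o$–term, is the only delicate point; the remaining manipulations are a routine first–order expansion.
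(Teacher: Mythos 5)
Your argument is correct where it applies, but it takes a genuinely different route from the paper's and quietly strengthens the lemma's hypotheses. The paper picks a sequence $t_i\to 0^+$ realizing the $\limsup$ that defines $D^+(\alpha\circ f)(x,v)$, splits into cases according to whether $f(x+t_iv)-f(x)$ is eventually zero or is positive (resp.\ negative) along a subsequence, and in the latter cases bounds the difference quotient by the product
\[
\frac{\alpha(y_i)-\alpha(y)}{y_i-y}\cdot\frac{f(x+t_iv)-f(x)}{t_i},\qquad y_i=f(x+t_iv),\ y=f(x),
\]
using that both one-sided secant slopes of $\alpha$ at $y=f(x)>0$ converge to $\alpha'(f(x))\ge 0$ and that the $\limsup$ of a product of nonnegative sequences is at most the product of the $\limsup$s. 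Your route instead expands $\alpha(s)=\alpha(u)+\alpha'(u)(s-u)+r(s)(s-u)$ and must annihilate the remainder $r(f(x+tv))\,\tfrac{f(x+tv)-u}{t}$, which forces you to assume that the difference quotient of $f$ stays bounded as $t\to 0^+$ --- an assumption the lemma does not make (it only asks $f:\R^n\to\R_{\ge0}$) and which the paper's multiplicative decomposition largely avoids, since there the possibly unbounded quotient of $f$ is multiplied by a factor tending to $\alpha'(f(x))$ rather than by one tending to $0$. You are candid about this, and in the only place the lemma is invoked ($f=V_i$ locally Lipschitz off its zero set, evaluated where $V_i>0$) your extra regularity holds, so your proof suffices for the paper's purposes; as a proof of the lemma as literally stated it is slightly weaker. (Both proofs, yours and the paper's, implicitly use that $f(x+tv)\to f(x)$ as $t\to 0^+$ so that the secant slopes of $\alpha$ converge to $\alpha'(f(x))$; this too holds only under a continuity assumption that the application provides.)
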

\begin{proof}
As $\alpha$ is monotonically increasing and differentiable on $\R_{>0}$ we have for all $y_0\in\R_{>0}$
\begin{IEEEeqnarray*}{rCl}
0\le \alpha'(y_0)
&=&
\liminf_{y\to y_0, y<y_0} (\alpha (y)-\alpha(y_0))/(y-y_0)\\
&=&
\limsup_{y\to y_0, y>y_0} (\alpha (y)-\alpha(y_0))/(y-y_0).
\end{IEEEeqnarray*}
Let $x\in \R^n$ with $y_0=f(x)>0$ and $v\in \R^n$.
There exists a sequence $(t_i)_{i\in\N}$ in $\R_{>0}$ with
limit $0$ so that 
\begin{IEEEeqnarray*}{c}
D^+ (\alpha \circ f) (x, v) = \lim_{i\to \infty}\tfrac{1}{t_{i}}\big(\alpha(f(x+t_iv))-\alpha(f(x))\big).
\IEEEeqnarraynumspace
\end{IEEEeqnarray*}
If $f(x+t_iv)=f(x)$ for all $i\ge j$ for some $j\in\N$, we have  $D^+
(\alpha \circ f) (x, v)=0$ and $D^+ f (x, v)\ge 0$, which
shows~\eqref{e:chainrule}. If for every $j\in\N$ there
exists $i\ge j$ so that $f(x+t_iv)-f(x)> 0$ holds, we set $y_i=f(x+t_iv)$, $y=f(x)$ and pick a subsequence
$(t_{i_j})$ of $(t_i)$ so that $y_{i_j}>y$ for all $i_j$.
Since $(\alpha(y_{i_j})-\alpha(y))/(y_{i_j}-y)\ge0$ and $f(x+t_{i_j}v)-f(x) >0$,
for all $j\in\N$
we get 
\begin{IEEEeqnarray*}{l}
\lim_{j\to
\infty}\frac{\alpha(y_{i_j})-\alpha(y)}{y_{i_j}-y}\frac{f(x+t_{i_j}v)-f(x)}{t_{i_j}}\\
\le
\limsup_{j\to \infty} \frac{\alpha(y_{i_j})-\alpha(y)}{y_{i_j}-y}
\limsup_{j\to \infty} \frac{f(x+t_{i_j}v)-f(x)}{t_{i_j}}\\
\le
\alpha'(f(x)) D^+f(x,v).
\end{IEEEeqnarray*}
If $(y_i-y)_{i\in \N}$ contains infinitely negative
entries, we pick a subsequence $(t_{i_j})$  of $(t_i)$ so that
we have $y_{i_j}<y$
for all $i_j$ and use a similar reasoning as in the previous case to arrive
  at~\eqref{e:chainrule}. 
\end{proof}

{\it Proof of Theorem~\ref{theorem1}.}
Let us define the $\mathcal{K}_\infty$ function $\bar
\mu(s):=\mu(s,\ldots,s)$.
We consider the trajectories 
$(\xi,\zeta,\nu,\omega)$
and 
$(\hat \xi,\hat \zeta,\hat\nu,\hat \omega)$ 
of the control systems 
$\Sigma$ and $\hat\Sigma$,
respectively. We assume that $\nu$ is given such
that~\eqref{inequality1} holds with $x=\xi(t)$, $\hat x=\hat\xi(t)$,
$u=\nu(t)$, $\hat u=\hat \nu(t)$, $w=\omega(t)$, $\hat w=\hat
\omega(t)$ for all $t\in\R_{\ge0}$.
We define
$c=\lambda^{-1} \big( 2\rho(||\hat \nu||_\infty) +
2\bar\mu(||\omega-\hat \omega||_\infty) \big) $ 
and the set
\begin{IEEEeqnarraybox}{c}
S=\{(x;\hat x)\in\R^n\times\R^{\hat n}\mid V(\hat x, x)\le c\}.
\end{IEEEeqnarraybox}
From~\eqref{inequality1}, we see that 
$y(t):=V(\hat \xi(t),
\xi(t))$ satisfies, whenever $(\hat \xi(t),\xi(t))$ is outside the set $S$, i.e.
$y(t)>c$, the inequality
\begin{IEEEeqnarray}{c}\label{e:t:sf:help}
\begin{IEEEeqnarraybox}[][c]{rCl}
\mathsf{D}^{+}y(t,1)&=&
\mathsf{D}^{+} V\left((\hat \xi(t), \xi(t)),
\begin{bmatrix}
\hat f(\hat \xi(t),\hat \nu(t),\hat \omega(t))\\ 
f(\xi(t), \nu(t), \omega(t))
\end{bmatrix}
\right)\\
&\le&
-\tfrac{1}{2}\lambda(V(\hat \xi(t), \xi(t))),
\end{IEEEeqnarraybox}
\IEEEeqnarraynumspace
\end{IEEEeqnarray}
where the equality in~\eqref{e:t:sf:help} follows from~\cite[Thm~4.3, Rmk~4.4,
pp.~353]{RoucheHabetsLaloy77}. Hence, $y$ is decreasing for $y(t)>c$.
Suppose for all $t\in \intoo{a,b}\subseteq \R_{\ge0}$ we have $y(t)>c$, then
$t',t\in\intoo{a,b}$ with \mbox{$t'\le t$} implies 
$y(t') \le y(t) -\tfrac{1}{2}\int_t^{t'} y(s)\mathrm{d}s$ \cite[Thm~2.3,
Rmk~2.5]{RoucheHabetsLaloy77}.
We show that $S$ is forward invariant, i.e., if there
exists $t_0\ge0$ with $(\xi(t_0),\hat \xi(t_0))\in S$ then we have
\mbox{$(\xi(t),\hat \xi(t))\in S$} for all $t\ge t_0$.
Let $(\xi(t_0),\hat \xi(t_0))\in S$ and suppose to the contrary that
the trajectories leave $S$. Since $S$ is closed, there exists $t_1>
t_0$ and $\varepsilon\in\R_{>0}$ 
such that 
$y(t_1)\ge c+\varepsilon$. Let $t_1$ be
minimal for this choice of $\varepsilon$. Since 
$y(t)$ is continuous in $t$, there exists
$\delta>0$ with $\delta<t_1-t_0$, so that 
$y(t)>c$
holds for all $t\in t_1+\intoo{-\delta,\delta}$. However,
$y$ is decreasing on $\intoo{-\delta,\delta}$
which contradicts the minimality
of $t_1$. It follows that $S$ is forward invariant and the output
trajectories satisfy for all $t\ge t_0$ the inequality
\begin{IEEEeqnarray}{c}\label{e:t:sf:help1}
\begin{IEEEeqnarraybox}[][b]{rCl}
|\zeta(t)-\hat\zeta(t)|
&\le&
\alpha^{-1}(V(\hat\xi(t),\xi(t)))\\
&\le&
\alpha^{-1}\left(\lambda^{-1} \left(
2\rho(||\hat \nu||_\infty) +
2\bar\mu(||\omega-\hat \omega||_\infty)\right) \right)\\
&\le&
\gamma_{\mathrm{ext}}(||\hat \nu||_\infty) +
\gamma_{\mathrm{int}}(||\omega-\hat \omega||_\infty)
\end{IEEEeqnarraybox}
\end{IEEEeqnarray}
with the $\mathcal{K}\cup\{0\}$ functions
$\gamma_{\mathrm{ext}}(s):=\alpha^{-1}(\lambda^{-1}(4\rho(s)))$ and
 $\gamma_{\mathrm{int}}(s):=\alpha^{-1}(\lambda^{-1}(4\bar\mu(s)))$. Note that here we used the fact that
for any $\mathcal{K}\cup\{0\}$ function $\gamma$ the inequality $\gamma(a+b)\le
  \gamma(2a)+\gamma(2b)$ holds for all $a,b\in\R_{\ge0}$. 

We proceed with the analysis of the trajectories outside of~$S$. We
define $t_0=\inf\{t\mid (\xi(t),\hat \xi(t))\in S\}$ (possibly infinite) and observe that
the function $y(t)=V(\hat \xi(t), \xi(t))$ is absolutely continuous, 
since $V$ is locally Lipschitz and the
state trajectories are absolutely continuous. Hence, $y(t)$
is differentiable almost everywhere and
$y$ satisfies 
\begin{IEEEeqnarraybox}{c'c}
\dot y(t)\le-\tfrac{1}{2}\lambda(y(t))
\end{IEEEeqnarraybox}%
for almost all $t\in\intco{0,t_0}$.
  
Then we apply Lemma 4.4 in~\cite{LSW96} and
obtain a $\mathcal{KL}$ function $\bar\beta$ with $\bar\beta(r,0)=r$, depending only on
$\lambda$, so that $y(t)\le
\bar\beta(y(0),t)$ holds for all $t\in\intco{0,t_0}$. It follows
that the output trajectories satisfy for all $t\in\intco{0,t_0}$ the
inequality
\begin{IEEEeqnarray}{rCl}\label{e:t:sf:help2}
|\zeta(t)-\hat\zeta(t)|
&\le&
\beta(V(\hat \xi(0),\xi(0)),t)
\end{IEEEeqnarray}
with $\beta(r,t)=\alpha^{-1}(\bar\beta(r,t))$. By combining the
bounds~\eqref{e:t:sf:help1} and~\eqref{e:t:sf:help2} we obtain the
desired estimate~\eqref{inequality}.

{\it Proof of  Corollary~\ref{c:2}.}
It follows immediately by the previous derivations that $V$
satisfies~\eqref{e:sf} with the $\mathcal{KL}$ function given by
$\bar\beta$ (as determined in the previous proof) and the
$\mathcal{K}\cup\{0\}$ functions are given by
$\gamma_{\mathrm{ext}}(s):=\lambda^{-1}(4\rho(s))$ and
 $\gamma_{\mathrm{int}}(s):=\lambda^{-1}(4\bar\mu(s))$.

\end{document}